\theoremstyle{plain}
\newtheorem{theorem}{Theorem}[section]
\newtheorem{lemma}[theorem]{Lemma}
\newtheorem{proposition}[theorem]{Proposition}
\theoremstyle{definition}
\theoremstyle{remark}
\newtheorem{remark}[theorem]{Remark}
\newcommand{\R}{{\mathbb R}}
\newcommand{\Q}{{\mathbb Q}}
\newcommand{\C}{{\mathbb C}}
\newcommand{\Z}{{\mathbb Z}}
\newcommand{\BC}{{\mathbb B}{\mathbb C}}
\newcommand{\BQ}{{\mathbb B}{\mathbb Q}}
\renewcommand{\bar}[1]{{\overline{#1}}}
\renewcommand{\i}{{\bf i}}
\renewcommand{\j}{{\bf j}}
\renewcommand{\k}{{\bf k}}
\newcommand{\e}{{\bf e}}
\newcommand{\edag}{{\bf e^\dagger}}
\newcommand{\calI}{{\mathcal I}}
\newcommand{\calO}{{\mathcal O}}
\newcommand{\calL}{{\mathcal L}}
\newcommand{\ip}[1]{\langle #1 \rangle}
\renewcommand{\Re}{{\operatorname{Re}}}
\numberwithin{equation}{section}
\begin{document}

\title[Zeta function for multicomplex algebra]{A zeta function for
  multicomplex algebra}

\author{A. Sebbar}%
\address{Institut de Math\'ematiques de Bordeaux, Universit\'e
  Bordeaux 1, 351 cours de la Lib\'eration, 33405 Talence cedex,
  France}%
\email{asebbar@math.u-bordeaux1.fr}%

\author{D.C. Struppa}%
\address{Chapman University, One University Drive, Orange CA 92866}%
\email{struppa@chapman.edu}%

\author{A. Vajiac}%
\address{Chapman University, One University Drive, Orange CA 92866}%
\email{avajiac@chapman.edu}%

\author{M. B. Vajiac}%
\address{Chapman University, One University Drive, Orange CA 92866}%
\email{mbvajiac@chapman.edu}%

\date{\today}%
\keywords{bicomplex numbers, multicomplex algebra, Dedekind zeta
  function, imaginary quadratic fields}%
\subjclass[2010]{30G35, 32A30, 11M06, 11R42}%

\begin{abstract}
  In this paper we define and study a Dedekind-like zeta function for
  the algebra of multicomplex numbers. By using the idempotent
  representations for such numbers, we are able to identify this zeta
  function with the one associated to a product of copies of the field
  of Gaussian rationals. The approach we use is substantially
  different from the one previously introduced by Rochon (for the
  bicomplex case) and by Reid and Van Gorder (for the multicomplex
  case).
\end{abstract}

\maketitle


\section{Introduction}
\label{sec:intro}

In this paper we build on the resurgent interest for the theory of
bicomplex and multicomplex numbers, to develop a definition (and
discuss the fundamental properties) of a Dedekind-like zeta function
for the spaces of multicomplex numbers. Our approach is significantly
different from the one recently used by Rochon~\cite{rochon_zeta}, and
Reid and Van Gorder~\cite{reid}. We should note that zeta functions
play a significant role in a variety of fields, ranging from number
theory to statistical mechanics, from quantum field theory (where they
are used to regularize divergent series and divergent integrals) to
dynamical systems, and finally to the theory of crystals and
quasi-crystals (see e.g.~\cite{moody}). We believe that zeta functions
for multicomplex algebras will play an important role in a similar
range of applications. From a mathematical point of view, we observe
that the study of the case of multicomplex algebras represents only a
first step towards the understanding of the seminal work of
Hey~\cite{hey} and Artin~\cite{artin}, within the larger context of
quotient polynomial algebras. We plan to return to these issues in
future papers.

To begin with, and without pretense of completeness, we recall that
the space of bicomplex numbers arises when considering the space
$\mathbb{C}$ of complex numbers as a real bidimensional algebra, and
then complexifying it. With this process one obtains a four
dimensional algebra usually denoted by $\BC$. The key point of the
theory of functions on this algebra is that (despite the problems
posed by the existence of zero-divisors in $\BC$) the classical notion
of holomorphicity can be extended from one complex variable to this
algebra, and one can therefore develop a new theory of
(hyper)holomorphic functions. Modern references on this topic are
~\cite{alss}, and ~\cite{bicomplexbook}.

The algebra $\BC$ is therefore four dimensional over the reals, just
like the skew-field of quaternions, but while in the space of
quaternions we have three anti-commutative imaginary units, in the
case of bicomplex numbers one considers two imaginary units $\i,\j$
which commute, and so the third unit $\k=\i\j$ ends up being a ``new''
root of $1$; such units are usually called {\em hyperbolic}. Indeed,
every bicomplex number $Z$ can be written as $Z=z_1+\j z_2$, where
$z_1$ and $z_2$ are complex numbers of the form $z_1=x_1+\i y_1,$ and
$z_2=x_2+\i y_2$.  There are several ways to represent bicomplex
numbers, see ~\cite{bicomplexbook}, but the one that will be central
in this paper is called the idempotent representation of bicomplex
numbers, and will be described in
section~\ref{sec:multicomplex_algebras}.

One can then use a similar process to define the space $\BC_n$ of
multicomplex numbers, namely the space generated over the reals by $n$
commuting imaginary units. When $n=2$, the space of multicomplex
numbers is simply the space of bicomplex numbers. The history of
bicomplex numbers is not devoid of interest, and we refer the reader
to the recent ~\cite{cerroni}.

In~\cite{rochon_zeta}, the author introduced and studied the
properties of a holomorphic Riemann zeta function of two complex
variables in the context of the bicomplex algebra. Similarly,
in~\cite{reid} the authors defined a multicomplex Riemann zeta
function in the setup of multicomplex algebras. Both these studies
generalize the Riemann zeta function to several complex variables, in
the sense that in the definition of the original Riemann zeta
function,
$$
\zeta(s) := \sum_{n=1}^\infty \frac{1}{n^s}\,,
$$
the complex variable $s$ is replaced by a bicomplex, respectively a
multicomplex variable. Our approach, in this paper, is very
different. As it is well known, Dedekind generalized the Riemann zeta
function by considering an algebraic number field $K$, and defining
the associated Dedekind zeta function by
$$
\zeta_K(s):=\sum_{I \subset \mathcal{O}_K} \frac{1}{N(I)^s}
$$
where the sum ranges through all the non-zero ideals $I$ in the ring
of integers $\mathcal{O}_K$ of K, and (see
Section~\ref{sec:dedekind_zeta} for the full detail) $N(I)$ denotes
the norm of the ideal $I$. When $K=\mathbb{Q}$, the Dedekind zeta
function reduces to the Riemann zeta function.

Thus, it is natural to look at the Dedekind approach for quadratic
fields, and concurrently the Hey~\cite{hey} approach for hypercomplex
algebras, as a way to define a Dedekind-like zeta function in the
context of the bicomplex and multicomplex vector spaces $\BQ$,
respectively $\BQ_n$.  As the reader will see, the crucial point in
being able to explicitly calculate this type of zeta function
(sometimes called the Hey zeta function in the literature) for
multicomplex numbers is the existence of the idempotent
representations of bicomplex and multicomplex numbers. This
representation will allow us to identify the Hey zeta functions for
products of copies of $\Q(\i)$, the field of Gaussian rationals, and
as a result we will have an explicit formula for such a function.


\bigskip

The architecture of the paper is as
follows. Section~\ref{sec:quadratic_fields} gives a self-contained
review of quadratic fields, so that all necessary results are
available to the reader. In particular, we will define the quadratic
$L$-function of a quadratic field $K$, and we will calculate the value
of its analytic extension for $s=1.$ Section~\ref{sec:dedekind_zeta}
provides all the necessary background on the Dedekind's zeta
function. In Section~\ref{sec:multicomplex_algebras} we give all the
necessary information on bicomplex and multicomplex algebras. The core
of the paper is Section~\ref{sec:bicomplex_zeta} where we utilize the
instruments introduced previously to calculate explicitly the
Dedekind-like (Hey) zeta function for the algebra $\BQ$ of bicomplex
numbers with rational coefficients. We finally show how this results
extends to the multicomplex case, and we explicitly write the
functional equation satisfied by these functions.

{\bf Acknowledgments}. We would like to thank Aurel Page for fruitful
discussions about the topic of this paper.

\bigskip

\section{Review on Quadratic Fields}
\label{sec:quadratic_fields}

For the convenience of the reader and in order to make the paper
self-contained, we summarize the main definitions and results for
quadratic fields. Notations, definitions, and results here
follow~\cite{ireland,neukirch} and the references therein.  

A {\em number field} is a finite degree field extension $K$ over
$\Q$. We denote by $\calO_K$ the ring of integers of $K$, i.e. the
ring of elements $\alpha\in K$ that are roots of monic polynomials in
$\Z[X]$.

A {\em quadratic field} is a degree two extension of $\Q$. It has the
form $K=\Q(\sqrt{d})$, where $d$ is a square free integer different
than 1. The ring of integers in this case is
\begin{align*}
  \calO_d := \calO_{\Q(\sqrt{d})} = \left\{
  \begin{array}{cc}
    \Z + \Z\sqrt{d}, & \text{if } d\not\equiv 1\mod 4\\
    \,\\
    \Z + \Z\,\displaystyle\frac{1+\sqrt{d}}{2}, & \text{if } d\equiv 1\mod 4
  \end{array}\right.
\end{align*}
The {\em norm} of an element $\alpha=a+b\sqrt d\in\Q(\sqrt d)$ is the
(non-necessarily positive) integer defined by
\begin{align*}
  N(\alpha) := (a+b\sqrt d)(a-b\sqrt d) = a^2 - db^2\,.
\end{align*}
The norm of an ideal $I$ of $\calO_K$ is
defined by
\begin{align*}
  N(I) := |\calO_{K}/I|\,,
\end{align*}
where we note that the quotient ring $\calO_{K}/I$ is always of finite
cardinality for each number field $K$. A particular case occurs when
$I=(\alpha)$ is a principal ideal, where $\alpha=a+b\sqrt d$. Then
\begin{align*}
  N(I) = N((\alpha)) = |a^2 - db^2|\,.
\end{align*}

For an odd prime $p\in\Z$, the Legendre symbol is defined by:
\begin{align*}
  \left(\frac{a}{p}\right) :=\left\{
  \begin{array}{ll}
    1, & \text{if } a^{\frac{p-1}{2}} \equiv 1\mod p\\
    -1, & \text{if } a^{\frac{p-1}{2}} \equiv -1\mod p\\
    0, & \text{if } a \equiv 0\mod p\,.
  \end{array}\right.
\end{align*}
Recall that to say that $a$ is a quadratic residue modulo $p$ means
that the equation $x^2=a\mod p$ has a solution. We can therefore
reformulate the definition of Legendre symbol as follows:
\begin{align*}
  \left(\frac{a}{p}\right) :=\left\{
  \begin{array}{ll}
    1, & \text{if } a \text{ is a quadratic residue modulo } p \text{ and } a\not\equiv 0\mod p\\
    -1, & \text{if } a \text{ is not a quadratic residue modulo } p \\
    0, & \text{if } a\equiv 0\mod p \,.
  \end{array}\right.
\end{align*}

An extension of the Legendre symbol, due to Kronecker, is the
following. Each integer $n$ has a prime factorization
\begin{align*}
  n = u p_1^{\ell_1}\cdots p_k^{\ell_k}\,,
\end{align*}
where $u=\pm 1$ and each $p_i, 1\leq i\leq k$ is prime. The
Kronecker-Legendre symbol is defined by:
\begin{align*}
  \left(\frac{a}{n}\right) := \left(\frac{a}{u}\right)
  \prod_{i=1}^k \left(\frac{a}{p_i}\right)^{\ell_i}\,,
\end{align*}
where:
\begin{enumerate}
\item for odd prime $p$, $\displaystyle\left(\frac{a}{p}\right)$
  is the Legendre symbol;
\item for $p=2$, we define
  \begin{align*}
      \left(\frac{a}{p}\right) :=\left\{
        \begin{array}{ll}
          0, & \text{if } a \text{ is even}\\
          1, & \text{if } a \equiv\pm 1\mod 8\\
          -1, & \text{if } a \equiv \pm 3\mod 8 \,.
        \end{array}\right.
    \end{align*}
\item for $u=\pm 1$ we define
  \begin{align*}
    \left(\frac{a}{1}\right)=1,\qquad
    \left(\frac{a}{-1}\right)=\left\{
        \begin{array}{ll}
          1, & \text{if } a \geq 0\\
          -1, & \text{if } a < 0\,.
        \end{array}\right.
  \end{align*}
\item we define
  \begin{align*}
    \left(\frac{a}{0}\right)=\left\{
        \begin{array}{ll}
          1, & \text{if } a =\pm 1\\
          0, & \text{otherwise}\,.
        \end{array}\right.
  \end{align*}
\end{enumerate}

A fundamental result for our study of the Dedekind zeta function is
the following
\begin{theorem}
  Every non-zero ideal of $\calO_d$ can be written as a product of
  prime ideals. The decomposition is unique up to the order of the
  factors.
\end{theorem}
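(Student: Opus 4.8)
The plan is to show that $\calO_d$ is a Dedekind domain and then extract the statement from its three defining properties. For the first property, \emph{Noetherianity}, note that $\calO_d$ is a free $\Z$-module of rank $2$, so every ideal is finitely generated already as a $\Z$-module. For the second, \emph{Krull dimension one}, let $\mathfrak p$ be a nonzero prime ideal and pick $0\ne\alpha\in\mathfrak p$; then $N(\alpha)=\alpha\bar\alpha$ is a nonzero rational integer lying in $\mathfrak p\cap\Z$, so $\calO_d/\mathfrak p$ is a quotient of the finite ring $\calO_d/(N(\alpha))$, hence a finite integral domain, hence a field, and $\mathfrak p$ is maximal. The third property, that $\calO_d$ is \emph{integrally closed} in $\Q(\sqrt d)$, holds by the very definition of $\calO_d=\calO_{\Q(\sqrt d)}$ as the integral closure of $\Z$.

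The technical heart is to show, for each maximal ideal $\mathfrak p$, that the fractional ideal $\mathfrak p^{-1}:=\{x\in\Q(\sqrt d):x\mathfrak p\subseteq\calO_d\}$ satisfies $\mathfrak p\mathfrak p^{-1}=\calO_d$. First I would record the auxiliary fact — proved by Noetherian induction on the set of ideals for which it fails — that every nonzero ideal of $\calO_d$ contains a product of nonzero prime ideals. Applying this to $(a)$ for a fixed $0\ne a\in\mathfrak p$, choosing $\mathfrak p_1\cdots\mathfrak p_r\subseteq(a)$ with $r$ minimal, one gets $\mathfrak p=\mathfrak p_i$ for some $i$ (by primality and maximality) and, by minimality of $r$, an element $b\in\calO_d$ with $b\mathfrak p\subseteq(a)$ but $b\notin(a)$; then $b/a\in\mathfrak p^{-1}\setminus\calO_d$, so $\calO_d\subsetneq\mathfrak p^{-1}$. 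Now $\mathfrak p\subseteq\mathfrak p\mathfrak p^{-1}\subseteq\calO_d$, and maximality of $\mathfrak p$ forces $\mathfrak p\mathfrak p^{-1}$ to equal $\mathfrak p$ or $\calO_d$; the first is impossible, since then every $x\in\mathfrak p^{-1}$ would stabilize the finitely generated faithful $\calO_d$-module $\mathfrak p$, hence (determinant trick) be integral over $\calO_d$ and so lie in $\calO_d$, contradicting $\calO_d\subsetneq\mathfrak p^{-1}$. I expect this equality $\mathfrak p\mathfrak p^{-1}=\calO_d$ — the one place where integral closedness is genuinely used — to be the main obstacle; everything else is formal.

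Existence of a prime factorization then follows by another Noetherian induction: were the set of nonzero ideals with no such factorization nonempty, a maximal element $I$ of it would be proper, hence contained in some maximal ideal $\mathfrak p$, and $I\subseteq I\mathfrak p^{-1}\subseteq\mathfrak p\mathfrak p^{-1}=\calO_d$ is an ideal strictly larger than $I$ (equality $I\mathfrak p^{-1}=I$ would again force $\mathfrak p^{-1}\subseteq\calO_d$); by maximality $I\mathfrak p^{-1}$ factors into primes, and multiplying by $\mathfrak p$ and using $\mathfrak p\mathfrak p^{-1}=\calO_d$ yields a factorization of $I$, a contradiction. For uniqueness, from $\mathfrak p_1\cdots\mathfrak p_r=\mathfrak q_1\cdots\mathfrak q_s$ one gets $\mathfrak q_1\cdots\mathfrak q_s\subseteq\mathfrak p_1$, so $\mathfrak q_j\subseteq\mathfrak p_1$ for some $j$, whence $\mathfrak q_j=\mathfrak p_1$ by maximality; multiplying through by $\mathfrak p_1^{-1}$ cancels a factor, and induction on $r$ finishes. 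Alternatively, one may simply invoke that $\calO_d$, being the ring of integers of a number field, is a Dedekind domain, and cite~\cite{neukirch,ireland}.
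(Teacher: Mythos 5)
Your proposal is correct, but there is nothing in the paper to compare it against: the theorem is stated as ``a fundamental result'' with no proof whatsoever, the authors implicitly deferring to the standard references (\cite{ireland,neukirch}) --- which is exactly the fallback you offer in your final sentence, and is all the paper actually needs. What you have written out is the classical self-contained argument that $\calO_d$ is a Dedekind domain and that Dedekind domains have unique prime-ideal factorization, and it is complete: the three defining properties are each verified correctly (Noetherianity from $\calO_d\simeq\Z^2$ as a $\Z$-module; dimension one from the finiteness of $\calO_d/(N(\alpha))$; integral closedness from transitivity of integrality), and you correctly isolate $\mathfrak p\,\mathfrak p^{-1}=\calO_d$ as the technical heart, with the right chain of lemmas (every nonzero ideal contains a product of primes, the construction of $b/a\in\mathfrak p^{-1}\setminus\calO_d$ from a minimal-length product inside $(a)$, and the determinant trick to rule out $\mathfrak p\,\mathfrak p^{-1}=\mathfrak p$). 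The existence and uniqueness inductions are likewise the standard ones and are sound. The only thing your write-up buys over the paper is self-containedness; conversely, note that your argument nowhere uses that $K$ is quadratic beyond the rank-$2$ freeness and the explicit norm, so it proves the theorem for the ring of integers of any number field --- slightly more general than the statement, at the cost of two pages the paper chose not to spend.
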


The {\em discriminant} of the quadratic number field $K=\Q(\sqrt d)$
is defined by
\begin{align*}
  \Delta = \Delta_K = \left\{
  \begin{array}{ll}
    d & \text{if } d\equiv 1\mod 4\\
    4d & \text{if } d\equiv 2,3\mod 4\,.
  \end{array}\right.
\end{align*}
Some primes in $\Z$ are not prime elements in $\calO_d$\,: for example,
in the case $d=-1$, we have:
\begin{align*}
  2 &= \i(1-\i)^2,\\
  5 &= (2+\i)(2-\i)\,.
\end{align*}
This situation is described precisely by the Legendre symbol. If $p$
is a rational prime (i.e. prime in $\Z$), then the ideal
$(p) = p\calO_d$ of $\calO_d$ has the following form: 
\begin{align*}
  (p) = \left\{
  \begin{array}{ll}
    \mathfrak{p} \mathfrak{p}' (\text{where }\mathfrak{p}\neq\mathfrak{p}'), &
                                 \text{ if } \displaystyle\left(\frac{\Delta}{p}\right)=1\\
    \,\\
    \mathfrak{p},  & \text{ if } \displaystyle\left(\frac{\Delta}{p}\right)=-1\\
    \,\\
    \mathfrak{p}^2,  & \text{ if } \displaystyle\left(\frac{\Delta}{p}\right)=0,
  \end{array}\right.
\end{align*}
where $\mathfrak{p}, \mathfrak{p}'$ are prime ideals of $\calO_d$. We
respectively say that in these cases the ideal $(p)$ splits, stays
inert, or ramifies in $\calO_d$.

A subset $F$ of $\calO_d$ is called a {\em fractional ideal} of
$\calO_d$ if there exists $\beta\in\calO_d$, $\beta\neq 0$, such that
$\beta F$ is an ideal of $\calO_d$. Then we have, for some
ideal $I$ of $\calO_d$,
\begin{align*}
  F = \left\{ \frac{\alpha}{\beta}\,\bigg|\, \alpha\in I\right\}\,,
\end{align*}
The set $F_{\Q(\sqrt d)}$ of fractional ideals of $\calO_d$ can be
equipped with an abelian group structure, as follows. If $I_1,I_2$ are ideals
of $\calO_d$, and
\begin{align*}
  F_1 = \left\{ \frac{\alpha_1}{\beta_1}\,\bigg|\, \alpha_1\in I_1\right\}\quad
  F_2 = \left\{ \frac{\alpha_2}{\beta_2}\,\bigg|\, \alpha_2\in I_2\right\}\,,
\end{align*}
where $\beta_1,\beta_2\in\calO_d$, we define:
\begin{align*}
  F_1F_2 := \left\{ \frac{\alpha}{\beta_1\beta_2}\,\bigg|\, \alpha\in I_1I_2\right\}
\end{align*}
where $I_1I_2$ is the ideal generated by all products
$\alpha_1\alpha_2$, with $\alpha_1\in I_1$, $\alpha_2\in I_2$.

The identity element
is $\calO_d$ and the inverse of a fractional ideal $F$ is given by
\begin{align*}
  F^{-1} = \left\{ \alpha\in\Q(\sqrt d)\,\bigg|\, \alpha F\subset \calO_d\right\}\,.
\end{align*}
The set of principal fractional ideals
$B_{\Q(\sqrt d)}\subset F_{\Q(\sqrt d)}$ is a subgroup of
$F_{\Q(\sqrt d)}$, and the quotient
\begin{align*}
  H_d := F_{\Q(\sqrt d)}\big/ B_{\Q(\sqrt d)}
\end{align*}
is called the {\em ideal class group} of $\Q(\sqrt d)$. Its order
$h_d$ is the {\em class number} of $\Q(\sqrt d)$. This notion measures
how far the ring $\calO_d$ is from being principal.

More precisely, if $h_d=1$ then there is only one
equivalence class in $H_d$, and each fractional ideal is equivalent to
the principal ideal $(1)=\calO_d$ modulo multiplication by principal
ideals. That is, for each fractional ideal $A$ there exists
$\alpha\in\calO_d$ such that
\begin{align*}
  (\alpha)=\alpha A = (1) = \calO_d\,,
\end{align*}
then
\begin{align*}
  A = \left( \frac{1}{\alpha}\right)\,.
\end{align*}
Hence each fractional ideal and, therefore, each ideal is principal.

There exist nine imaginary quadratic fields of class number one. They
are $\Q(\sqrt d)$ for
\begin{align*}
  d = -1, -2, -3, -7, -11, -19, -43, -67, -163.
\end{align*}

If $K=\Q(\sqrt d)$, $d<0$, all norms $a^2-db^2$ are non-negative and
the unit group in $\calO_d$ is
\begin{align*}
  \calO_d^\times = \left\{
  \begin{array}{ll}
    \{\pm 1,\pm \i\} & \text{if } d=-1\\
    \{\pm 1,\pm \zeta_3,\pm\zeta_3^2\} & \text{if } d=-3\\
    \{\pm 1\} & \text{otherwise }\,,
  \end{array}\right.
\end{align*}
where $\zeta_3$ is the principal cubic root of unity. Therefore, the
order of the group $\calO_d^\times$ of units is:
\begin{align*}
  w_d = w = 4, 6, 2
\end{align*}
according to the values $d=-1$, $d=-3$, or $d\neq -1,-3$,
respectively.

For real quadratic fields $\Q(\sqrt d)$, $d>0$, the situation is very
different. In this case, the group of units is infinite and has the
form
\begin{align*}
  \calO_d^\times = \left\{ \pm u_d^n\,\big|\, n\in\Z\right\}\simeq \Z/2\Z\times\Z\,,
\end{align*}
where $u_d>1$ is the so-called {\em fundamental unit}. It is a
difficult problem to find $u_d$.

For real quadratic fields $K=\Q(\sqrt d)$, $d>0$, the logarithm of the
fundamental unit is called the {\em regulator}. For imaginary quadratic
fields, the regulator is 1, as it is for the ring of (rational)
integers $\Z$. For example, if $d=2$, $u_2 = 1+\sqrt 2$ and the
regulator $R_2$ of $\Q(\sqrt 2)$ is
\begin{align*}
  R_2 = \log(1+\sqrt 2)\,,
\end{align*}
where $1+\sqrt 2$ is the fundamental solution of the Pell equation
\begin{align*}
  x^2 - 2y^2 =1.
\end{align*}

Let $K$ be a quadratic field with discriminant $\Delta$, so that
$\Delta = d$, if $d\equiv 1\mod 4$ or $4d$ if $d\equiv 2,3\mod 4$. The
quadratic character of $K$ is the morphism
\begin{align*}
  \chi_K:(\Z,+)\to\C,\qquad \chi_K(m):=\left(\frac{\Delta}{m}\right)\,.
\end{align*}
It is a fundamental property that $\chi_K$ is periodic with period $|\Delta|$.

For $K=\Q(\sqrt d)$, we can also define the Dirichlet character, also
denoted by $\chi_K$, by
\begin{align*}
  \chi_K&:\left(\Z\big/|\Delta|\Z\right)^\times\to\C^\times,\\
  \chi_K\left(m+|\Delta|\Z\right)&=\left\{
    \begin{array}{ll}
     \left(\frac{m}{d}\right), & \text{if } d\equiv 1\mod 4\\
      \,\\
     (-1)^{\frac{m-1}{2}}\left(\frac{m}{d}\right), & \text{if } d\equiv 3\mod 4\\
      \,\\
  (-1)^{\frac{m^2-1}{8}}\left(\frac{m}{\left(\frac{d}{2}\right)}\right), & \text{if } d\equiv 2\mod 8\\
      \,\\
  (-1)^{\frac{(m-1)(m+5)}{8}}\left(\frac{m}{\left(\frac{d}{2}\right)}\right), & \text{if } d\equiv 6\mod 8\\
    \end{array}\right.
\end{align*}

In the case $K=\Q(\i)$, $\calO_{\Q(\i)} = \Z[\i]$, the ring of
Gaussian integers, and the Dirichlet character is
\begin{align*}
  \chi_{\Q(\i)}:\left(\Z\big/4\Z\right)^\times\to\C^\times,\qquad
  \chi_{\Q(\i)}(m) = (-1)^{\frac{m-1}{2}}\,.
\end{align*}


We can formulate the decomposition of rational primes as follows. Let
$p$ be a rational prime. The decomposition of $(p)$ in $\calO_K$ is
given by:
\begin{align*}
  p\calO_K = \left\{
  \begin{array}{ll}
    \mathfrak{p} \mathfrak{p}', \text{where }N(\mathfrak{p})=N(\mathfrak{p}')=p, &
                                 \text{ if } \chi_K(p)=1\\
    \,\\
    (p), \text{where }N((p))=p^2,  & \text{ if } \chi_K(p)=-1\\
    \,\\
    (p)^2, \text{where }N((p))=p,  & \text{ if } \chi_K(p)=0
  \end{array}\right.
\end{align*}

The quadratic $L$-function of a quadratic field $K$ with discriminant
$\Delta$ is given by:
\begin{align*}
  L(\chi_K,s) = \sum_{n\geq 1} \chi_K(n) n^{-s}
  = \prod_{p\text{ prime}} \left(1-\chi_K(p)p^{-s}\right)^{-1}\,.
\end{align*}
The sum is defined for $\Re(s)> 1$, but actually the quadratic
$L$-function extends analytically to $\Re(s)>0$. The value of
$L(\chi_K,1)$ is a remarkable one (see
e.g.~\cite{cohen,ireland,lang}).
\begin{proposition}
  (i) For a real quadratic field $K$:
  \begin{align*}
    L(\chi_K,1) = -\frac{1}{\sqrt{|\Delta|}} \sum_{r=1}^{|\Delta|-1}
    \chi_K(r)\log\left(\sin\left(\frac{\pi r}{|\Delta|}\right)\right)\,.
  \end{align*}
  (ii) For an imaginary quadratic field $K$:
  \begin{align*}
    L(\chi_K,1) = -\frac{\pi}{|\Delta|^{\frac32}} \sum_{r=1}^{|\Delta|-1}
    \chi_K(r) r\,.
  \end{align*}
\end{proposition}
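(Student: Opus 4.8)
The plan is to compute $L(\chi_K,1)$ directly from its Dirichlet series by means of the finite Fourier expansion of $\chi_K$, proving (i) and (ii) in parallel and letting the dichotomy between them emerge from the parity of $\chi_K$.

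The first step is to record two properties of the Dirichlet character $\chi_K$ recalled above. It is a \emph{primitive} character modulo $f:=|\Delta|$ — this is the classical fact that the conductor of the Kronecker character attached to a fundamental discriminant equals its absolute value — and its parity is $\chi_K(-1)=\operatorname{sign}(\Delta)$, so that $\chi_K$ is even for real quadratic fields and odd for imaginary ones. Primitivity yields, for every $n\in\Z$, the Gauss-sum relation $\sum_{a\bmod f}\chi_K(a)e^{2\pi i a n/f}=\overline{\chi_K}(n)\,\tau(\chi_K)$, where $\tau(\chi_K):=\sum_{a\bmod f}\chi_K(a)e^{2\pi i a/f}$; since $\chi_K$ is real-valued this inverts to $\chi_K(n)=\tau(\chi_K)^{-1}\sum_{a=1}^{f-1}\chi_K(a)e^{2\pi i a n/f}$ (the term $a=0$ drops out because $\chi_K(0)=0$).

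The second step is to substitute this expression into $L(\chi_K,1)=\sum_{n\ge1}\chi_K(n)/n$ and interchange the order of summation — legitimate by a standard Abel-summation argument, since for each fixed $a$ the partial sums of $e^{2\pi i a n/f}$ are bounded. Using the boundary value $\sum_{n\ge1}z^n/n=-\log(1-z)$, valid for $|z|=1$, $z\ne1$ (Abel's theorem), this gives
\begin{align*}
  L(\chi_K,1) = -\frac{1}{\tau(\chi_K)}\sum_{a=1}^{f-1}\chi_K(a)\,\log\!\left(1-e^{2\pi i a/f}\right).
\end{align*}
From the elementary identity $1-e^{2\pi i a/f}=-2i\sin(\pi a/f)\,e^{\pi i a/f}$ one extracts, for $1\le a\le f-1$, the real and imaginary parts $\log\!\big(1-e^{2\pi i a/f}\big)=\log\!\big(2\sin(\pi a/f)\big)+i\pi\big(\tfrac{a}{f}-\tfrac12\big)$. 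Now pair the index $a$ with $f-a$: since $\sin(\pi(f-a)/f)=\sin(\pi a/f)$ while $\tfrac{f-a}{f}-\tfrac12=-\big(\tfrac{a}{f}-\tfrac12\big)$, the logarithmic term is symmetric and the linear term antisymmetric under this involution. If $\chi_K$ is even (real case) the linear contribution cancels and, discarding the constant $\log 2$ via $\sum_{a}\chi_K(a)=0$, one gets $L(\chi_K,1)=-\tau(\chi_K)^{-1}\sum_{a=1}^{f-1}\chi_K(a)\log\sin(\pi a/f)$; if $\chi_K$ is odd (imaginary case) the logarithmic contribution cancels and, again using $\sum_{a}\chi_K(a)=0$, one gets $L(\chi_K,1)=-\tfrac{i\pi}{f\,\tau(\chi_K)}\sum_{a=1}^{f-1}a\,\chi_K(a)$.

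The final, and only substantive, step is to insert the value of the quadratic Gauss sum: by Gauss's theorem on its sign, $\tau(\chi_K)=\sqrt{\Delta}$ with the convention $\sqrt{\Delta}=\sqrt{|\Delta|}$ when $\Delta>0$ and $\sqrt{\Delta}=i\sqrt{|\Delta|}$ when $\Delta<0$. Substituting $\tau(\chi_K)=\sqrt{|\Delta|}$ in the real case gives (i) verbatim, while substituting $\tau(\chi_K)=i\sqrt{|\Delta|}$ in the imaginary case converts $-\tfrac{i\pi}{f\,\tau(\chi_K)}$ into $-\tfrac{\pi}{|\Delta|^{3/2}}$ and yields (ii). The main obstacle is precisely this Gauss-sum sign determination — everything else is the bookkeeping above (primitivity of $\chi_K$, the parity computation, and the justification of the summation interchange). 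Since it is a classical result I would either cite it from \cite{ireland} or reprove it by the usual contour/Poisson-summation argument; a more streamlined but less self-contained alternative is to extract the value of $\tau(\chi_K)$ from the functional equation of $L(\chi_K,s)$.
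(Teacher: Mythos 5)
Your proof is correct. Note that the paper itself does not prove this proposition: it states it as a known result and refers to the literature (Cohen, Ireland--Rosen, Lang), and the argument you give --- finite Fourier expansion of the primitive character $\chi_K$ via the Gauss sum, termwise evaluation of $\sum_{n\ge 1} e^{2\pi i a n/f}/n$ by Abel's theorem, the parity dichotomy under $a\mapsto f-a$, and finally Gauss's determination of the sign of $\tau(\chi_K)=\sqrt{\Delta}$ --- is exactly the classical proof found in those references. You correctly identify the Gauss-sum sign as the one genuinely nontrivial input; the rest of your bookkeeping (primitivity and parity of $\chi_K$, the branch of the logarithm, the vanishing of $\sum_a\chi_K(a)$) all checks out.
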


\bigskip

\section{The Dedekind zeta function}
\label{sec:dedekind_zeta}

The Dedekind zeta function of a quadratic field $K$ is given by:
\begin{align}
  \label{def:dedekind}
  \zeta_K(s) = \sum_{\mathfrak{a}}N(\mathfrak{a})^{-s}
  = \prod_{\mathfrak{p}}\left(1-N(\mathfrak{p})^{-s}\right)^{-1}\,.
\end{align}
The sum is over all non-zero ideals $\mathfrak{a}$ of $\calO_K$ and
the product is over all prime ideals of $\calO_K$. If denote by
\begin{align*}
  a_n = \#\{\mathfrak{a}\,\big|\, N(\mathfrak{a}) = n \}\,,
\end{align*}
then
\begin{align*}
  \zeta_K(s) = \sum_{n>0} \frac{a_n}{n^s}\,.
\end{align*}

The fundamental property of the Dedekind zeta function is the
following factorization:
\begin{theorem}
  For $\Re(s)>1$, we have
  \begin{align*}
    \zeta_K(s) = \zeta(s) L(\chi_K,s)\,,
  \end{align*}
  where $s\mapsto \zeta(s)$ is the Riemann zeta function.
\end{theorem}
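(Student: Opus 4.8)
The plan is to compare the Euler products on the two sides prime by prime. First I would record that the product $\prod_{\mathfrak{p}}\bigl(1-N(\mathfrak{p})^{-s}\bigr)^{-1}$ in~\eqref{def:dedekind} converges absolutely for $\Re(s)>1$: each rational prime $p$ has at most two prime ideals of $\calO_K$ lying over it, and each such $\mathfrak{p}$ has $N(\mathfrak{p})\geq p$, so the product is dominated by $\zeta(\Re(s))^2$. Absolute convergence is what licenses all the regroupings below, so it is worth stating explicitly before proceeding.

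Next I would partition the prime ideals of $\calO_K$ according to the rational prime they sit above. Every nonzero prime ideal $\mathfrak{p}$ satisfies $\mathfrak{p}\cap\Z=p\Z$ for a unique rational prime $p$, and $\mathfrak{p}$ then occurs in the factorization of $(p)=p\calO_K$. Hence
\begin{align*}
  \zeta_K(s)=\prod_{p\text{ prime}}\ \prod_{\mathfrak{p}\mid p}\bigl(1-N(\mathfrak{p})^{-s}\bigr)^{-1}\,.
\end{align*}
The inner product is the local factor at $p$, and the decomposition law recalled in Section~\ref{sec:quadratic_fields} tells us precisely what it is in each of the three cases $\chi_K(p)\in\{1,-1,0\}$.

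The core of the argument is then the elementary observation that in every one of these three cases the local factor at $p$ equals $(1-p^{-s})^{-1}(1-\chi_K(p)p^{-s})^{-1}$. If $\chi_K(p)=1$ then $(p)=\mathfrak{p}\mathfrak{p}'$ with $N(\mathfrak{p})=N(\mathfrak{p}')=p$, so the local factor is $(1-p^{-s})^{-2}$. If $\chi_K(p)=-1$ then $(p)$ is inert with $N((p))=p^2$, and $(1-p^{-2s})^{-1}=(1-p^{-s})^{-1}(1+p^{-s})^{-1}$. If $\chi_K(p)=0$ then $(p)=\mathfrak{p}^2$ with $N(\mathfrak{p})=p$, so the local factor is $(1-p^{-s})^{-1}$, which agrees since the vanishing of $\chi_K(p)$ removes the second factor. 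Separating the product over $p$ accordingly and invoking the Euler products $\zeta(s)=\prod_p(1-p^{-s})^{-1}$ and $L(\chi_K,s)=\prod_p(1-\chi_K(p)p^{-s})^{-1}$ recalled in Section~\ref{sec:quadratic_fields} gives $\zeta_K(s)=\zeta(s)L(\chi_K,s)$.

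I expect the only delicate point to be the convergence bookkeeping: one must secure absolute convergence of the Euler product for $\zeta_K$ in $\Re(s)>1$ before regrouping by rational prime and before splitting the result into two independent infinite products, since a priori this product is only defined through the ideal sum in~\eqref{def:dedekind}. Everything else reduces to the finite case analysis above, which rests entirely on the prime decomposition law already in hand.
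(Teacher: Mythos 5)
Your proposal is correct and follows essentially the same route as the paper: factor the Euler product of $\zeta_K$ over rational primes, verify by the three-case decomposition law ($\chi_K(p)=1,-1,0$) that each local factor equals $(1-p^{-s})^{-1}(1-\chi_K(p)p^{-s})^{-1}$, and recombine into $\zeta(s)L(\chi_K,s)$. Your explicit attention to absolute convergence of the Euler product for $\Re(s)>1$ is a welcome addition that the paper leaves implicit, but it does not change the argument.
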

We give below an idea of the proof. For $\Re(s)>1$ we can write:
\begin{align}\label{zeta decomposition}
  \zeta_K(s) = \prod_{p} \prod_{\mathfrak{p}|(p)} \left(1-N(\mathfrak{p})^{-s}\right)^{-1}\,,
\end{align}
where $(p)=p\,\calO_K$ is the ideal generated by $p$ in
$\calO_K$. Furthermore, for $\Re(s)>1$ we have:
\begin{align*}
  \zeta(s) L(\chi_K,s) = \prod_{p} (1-p^{-s})^{-1} (1-\chi(p) p^{-s})^{-1}
\end{align*}
\begin{lemma}
  \begin{align*}
    \prod_{\mathfrak{p}|(p)} \left(1-N(\mathfrak{p})^{-s}\right)
    = (1-p^{-s})(1-\chi_K(p) p^{-s})\,.
  \end{align*}
\end{lemma}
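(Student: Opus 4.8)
The plan is to prove the identity by a three-way case analysis according to the value of the quadratic character $\chi_K(p)$, using exactly two ingredients: the explicit decomposition law for $p\calO_K$ recalled just above the lemma, and the multiplicativity of the ideal norm. In each case the product $\prod_{\mathfrak{p}\mid(p)}\bigl(1-N(\mathfrak{p})^{-s}\bigr)$ has either one or two factors, whose values are read off directly from the decomposition, and one checks by inspection that the result agrees with $(1-p^{-s})(1-\chi_K(p)p^{-s})$.

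\emph{Split case.} If $\chi_K(p)=1$, then $p\calO_K=\mathfrak{p}\mathfrak{p}'$ with $\mathfrak{p}\neq\mathfrak{p}'$ and $N(\mathfrak{p})=N(\mathfrak{p}')=p$. Hence the left-hand side is $(1-p^{-s})^2$, while the right-hand side is $(1-p^{-s})(1-p^{-s})=(1-p^{-s})^2$. \emph{Inert case.} If $\chi_K(p)=-1$, then $(p)$ is itself a prime ideal of $\calO_K$ with $N((p))=p^2$, so the left-hand side is the single factor $1-p^{-2s}$, which equals $(1-p^{-s})(1+p^{-s})=(1-p^{-s})(1-\chi_K(p)p^{-s})$. \emph{Ramified case.} If $\chi_K(p)=0$, then $p\calO_K=\mathfrak{p}^2$ with $N(\mathfrak{p})=p$; the unique prime ideal dividing $(p)$ is $\mathfrak{p}$, so the left-hand side is $1-p^{-s}$, and the factor $1-\chi_K(p)p^{-s}$ on the right collapses to $1$.

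I do not expect any genuine obstacle: once the cases are laid out, the identity is an immediate verification, since all of the arithmetic content is already packaged in the prime decomposition law. The only subtlety worth a sentence is that in the ramified case the product is taken over the \emph{set} of distinct prime ideals above $p$, so $\mathfrak{p}$ is counted exactly once (and not with multiplicity $2$); with that convention fixed, the three computations above complete the proof, and multiplying the resulting identity over all rational primes $p$ then yields the factorization $\zeta_K(s)=\zeta(s)L(\chi_K,s)$ via~\eqref{zeta decomposition}.
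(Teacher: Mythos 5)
Your proposal is correct and follows exactly the same route as the paper: a three-way case analysis on $\chi_K(p)\in\{1,-1,0\}$ using the decomposition law and the stated norms, with identical computations in each case. Your added remark that in the ramified case the product runs over the set of distinct primes above $p$ (so $\mathfrak{p}$ is counted once) is a small clarification the paper leaves implicit, but the argument is the same.
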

\begin{proof}
  Indeed, for a given rational prime $p$, if $\chi_K(p)=1$, then
  \begin{align*}
    (p) = p\,\calO_K = \mathfrak{p}\mathfrak{p}',\qquad
    N(\mathfrak{p})=N(\mathfrak{p}')=p\,,
  \end{align*}
  and both sides are $(1-p^{-s})^2$.  If $\chi_K(p)=-1$, then $p$ is
  inert and $\mathfrak{p}=(p)$, with $N(\mathfrak{p})=p^2$, and both
  sides are
  \begin{align*}
    (1-p^{2(-s)}) = (1-p^{-s})(1+p^{-s})\,.
  \end{align*}
  Finally, if $\chi_K(p)=0$, then both sides are $1-p^{-s}$.
\end{proof}
From this lemma, we obtain
\begin{align*}
  \zeta_K(s) &= \prod_p \prod_{\mathfrak{p}|(p)} \left(1-N(\mathfrak{p})^{-s}\right)^{-1}\\
  &= \prod_p (1-p^{-s})^{-1} (1-\chi_K(p) p^{-s})^{-1}\\
  &= \zeta(s) L(\chi_K,s)\,.
\end{align*}

A beautiful consequence of what we have seen is the Dirichlet class
number formula for imaginary quadratic fields. It says that:
\begin{align*}
  \frac{2\pi h}{w \sqrt{|\Delta|}} = L(\chi_K,1)\,,
\end{align*}
where $h$ is the ideal class number of $K$, and $w$ is the number of
roots of unity in $K$.

\bigskip

We turn now to the case $K=\Q(\i)$, which is our main interest. The
Dedekind zeta function of $\Q(\i)$ is given by:
\begin{align}
  \label{eq:dedekind_zeta}
  \zeta_{\Q(\i)} &= \zeta(s) L(\chi_{-4},s)\\
                 &= \frac{1}{1-2^{-s}} \prod_{p\equiv 1(\text{mod } 4)}
                   \frac{1}{(1-p^{-s})^2}\prod_{p\equiv 3(\text{mod } 4)}
                   \frac{1}{1-p^{-2s}},\nonumber
\end{align}
where, in order to simplify the notations, we write $\chi_{-4}$
instead of $\chi_{\Q(\i)}$, since $-4$ is the discriminant of the
field $\Q(\i)$.  The Dirichlet $L$-series is then:
\begin{align*}
  L(\chi_{-4},s) = \sum_{n\geq 1} \frac{\chi_{-4}(n)}{n^s}\,.
\end{align*}
Note that $\chi_{-4}$ is the character of $\Z$, of period $4$, given
by:
\begin{align*}
  \chi_{-4}(n) = \left\{
  \begin{array}{ll}
    0, & \text{if } n\equiv 0\mod 4\\
    1, & \text{if } n\equiv 1\mod 4\\
    0, & \text{if } n\equiv 2\mod 4\\
    -1, & \text{if } n\equiv 3\mod 4
  \end{array}\right.\,.
\end{align*}
This implies that
\begin{align*}
  L(\chi_{-4},s) = \sum_{n\geq 0} \frac{1}{(4n+1)^s} - \sum_{n\geq 0} \frac{1}{(4n+3)^s}\,.
\end{align*}

\bigskip

To study the analytic continuation of $L(\chi_{-4},s)$ we relate it to
the Hurwitz zeta function, defined by:
\begin{align*}
  \zeta(s,\alpha) = \sum_{n\geq 0} \frac{1}{(n+\alpha)^s},\qquad
  \Re(s)>1, \quad\alpha>0.
\end{align*}
This function reduces to the Riemann zeta function for $\alpha=1$. The
general theory says that $s\mapsto \zeta(s,\alpha)$ admits a
meromorphic continuation to the whole plane, with a simple pole at
$s=1$ with residue 1. We need the special value
\begin{align*}
  \lim_{s\to 1}\left(\zeta(s,\alpha) - \frac{1}{s-1}\right)
  = -\frac{\Gamma'(\alpha)}{\Gamma(\alpha)}
\end{align*}
where $\Gamma$ is the Euler function (see e.g.~\cite{bateman}). An
immediate consequence of this fact is that $s\mapsto L(\chi_{-4},s)$
is analytically continuable to the whole plane, according to the
equality:
\begin{align*}
  L(\chi_{-4},s) = 4^{-s}\left(\zeta\left(s,\frac14\right) - \zeta\left(s,\frac34\right)\right)\,.
\end{align*}

Another representation of $L(\chi_{-4},s)$ is
\begin{align*}
  L(\chi_{-4},s) = \sum_{n=0}^{\infty} \frac{(-1)^n}{(2n+1)^s} = \beta(s)\,,
\end{align*}
where $\beta$ is the Dirichlet Beta function.

The function $s\mapsto\zeta(s)L(\chi_{-4},s)$ extends meromorphically
to all $\C$, with a simple pole at $s=1$. We then have, for
$0<|s-1|<\infty$:
\begin{align*}
  \zeta(s)L(\chi_{-4},s) = \frac{C_{-1}}{s-1} + C_0 + C_1(s-1) + C_2(s-1)^2+\cdots
\end{align*}
We give below the values of $C_{-1}$ and $C_0$. From the expansions:
\begin{align*}
  \zeta(s) = \frac{1}{s-1} + \gamma + \gamma_1(s-1)+\cdots
\end{align*}
where $\gamma$ is the (small) Euler constant, and
\begin{align*}
  L(\chi_{-4},s) = L(\chi_{-4},1) + L'(\chi_{-4},1)(s-1) + \cdots
\end{align*}
we obtain
\begin{align*}
  \zeta(s)L(\chi_{-4},s) = \frac{L(\chi_{-4},1)}{s-1} + L'(\chi_{-4},1) + \gamma L(\chi_{-4},1) + \cdots
\end{align*}
Hence $L(\chi_{-4},1)$ is the residue of $\zeta(s)L(\chi_{-4},s)$ at $s=1$ and
\begin{align*}
  \gamma_{\Q(\i)} := L'(\chi_{-4},1) + \gamma L(\chi_{-4},1)
\end{align*}
is what we may call the Euler constant $\gamma_{\Q(\i)}$ of the field
$\Q(\i)$ (as $\gamma=\gamma_\Q$).
\begin{align*}
  L(\chi_{-4},1) &= \lim_{s\to 1} 4^{-s}\left(\zeta\left(s,\frac14\right) - \zeta\left(s,\frac34\right)\right)\\
                &= \frac14\left(\lim_{s\to 1}\left(\zeta\left(s,\frac14\right)-\frac{1}{s-1}\right)-
                  \lim_{s\to 1}\left(\zeta\left(s,\frac34\right)-\frac{1}{s-1}\right)\right)\\
                &= \frac14\left(\frac{\Gamma'\left(\frac34\right)}{\Gamma\left(\frac34\right)}
                  -\frac{\Gamma'\left(\frac14\right)}{\Gamma\left(\frac14\right)}\right)\,.
\end{align*}
The logarithmic derivative $\displaystyle\frac{\Gamma'}{\Gamma}$ of
the $\Gamma$-function is a remarkable function. We only need to know
it satisfies the functional equation
\begin{align*}
  \frac{\Gamma'(z)}{\Gamma(z)} - \frac{\Gamma'(1-z)}{\Gamma(1-z)}
  = \pi\cot(\pi z)\,.
\end{align*}
Hence
\begin{align*}
  L(\chi_{-4},1) = -\frac14\left(\frac{\Gamma'\left(\frac14\right)}{\Gamma\left(\frac14\right)}
                  -\frac{\Gamma'\left(\frac34\right)}{\Gamma\left(\frac34\right)}\right)
  =\frac{\pi}{4}\,.
\end{align*}

To find the Euler constant $\gamma_{\Q(\i)}$ of the field $\Q(\i)$, we
observe that:
\begin{align*}
  \frac{\gamma_{\Q(\i)}}{L(\chi_{-4},1)} = \gamma + \frac{L'(\chi_{-4},1)}{L(\chi_{-4},1)}\,,
\end{align*}
which is what it is called the Sierpinski constant (see
e.g.~\cite{finch}):
\begin{align*}
  \gamma + \frac{L'(\chi_{-4},1)}{L(\chi_{-4},1)} &=
        \log\left(2\pi e^{2\gamma} \frac{\Gamma\left(\frac34\right)^2}{\Gamma\left(\frac14\right)^2}\right)\\
        &=\frac{\pi}{3} - \log 4 + 2\gamma - 4\sum_{k=1}^\infty\log\left(1-e^{-2\pi k}\right)\\
        &= 0.8228252\dots
\end{align*}
Hence the analogue of the Euler constant for $\Q(\i)$ is:
\begin{align*}
  \gamma_{\Q(\i)} &= L'(\chi_{-4},1) + \gamma L(\chi_{-4},1)\\
  &= \frac{\pi}{4}\log\left(2\pi e^{2\gamma}
    \frac{\Gamma\left(\frac34\right)^2}{\Gamma\left(\frac14\right)^2}\right)\\
  &= \frac{\pi}{2}\left(\gamma + \log 2 + \frac32\log\pi -2\log\Gamma\left(\frac14\right)\right)\,,
\end{align*}
where we used the classical complement formula for the
$\Gamma$-function:
\begin{align*}
  \Gamma(z)\Gamma(1-z) = \frac{\pi}{\sin(\pi z)},
  \qquad z\neq 0,\pm 1,\pm 2,\dots
\end{align*}

\bigskip

\begin{remark}
  \label{remark}
  We would like to give an idea on how to calculate the coefficients
  of the Dirichlet series of $\zeta_{\Q(\i)}(s)$. First, for
  $\Re(s)>1$, we have:
  \begin{align*}
    \zeta_{\Q(\i)}(s) = \frac14\sum_{(m,n)\neq(0,0)} \frac{1}{(m^2+n^2)^s}
    = \frac14\sum_{n=1}^\infty \frac{r_2(n)}{n^s}\,,
  \end{align*}
  where
  \begin{align*}
    r_2(n) = \big|\{(p,q)\in\Z\times\Z,\quad p^2+q^2=n\}\big|
  \end{align*}
  is the number of representation of $n$ as a sum of two squares.

  Secondly, we have:
  \begin{align*}
    \zeta_{\Q(\i)}(s) &= \zeta(s) L(\chi_{-4},s) 
                        = \left(\sum_{n=1}^\infty \frac{1}{n^s}\right) 
                        \left(\sum_{n=1}^\infty \frac{\chi_{-4}(n)}{n^s}\right)\\
                      &= \sum_{n_1,n_2=1}^\infty \frac{\chi_{-4}(n_2)}{n_1^sn_2^s}
                        = \sum_{n=1}^\infty \frac{\left(\displaystyle\sum_{d | n} \chi_{-4}(d)\right)}{n^s}\,,
  \end{align*}
  so that 
  \begin{align*}
    \zeta_{\Q(\i)}(s) = \sum_{n=1}^\infty \frac{a_n}{n^s}\,,
  \end{align*}
  where
  \begin{align*}
    a_n = \sum_{d | n} \chi_{-4}(d) = \frac14 r_2(n)
    = \frac14 \big|\{(p,q)\in\Z\times\Z,\quad p^2+q^2=n\}\big|\,.
  \end{align*}
  For $\Re(s)>1$ we obtain:
  \begin{align*}
    \zeta_{\Q(\i)}(s) = 1 + \frac{1}{2^s} + \frac{1}{4^s} + \frac{2}{5^s} 
    + \frac{1}{8^s} + \frac{2}{10^s} + \frac{2}{13^s} 
    + \frac{1}{16^s} + \frac{2}{17^s} + \frac{1}{18^s} + \frac{2}{20^s} 
    + \frac{3}{25^s} + \cdots
  \end{align*}
  A nice application of the formula for $r_2(n)$ given above is the
  following (see~\cite{rademacher}). Let $\sigma_0^{(1)}(n)$ and
  $\sigma_0^{(3)}(n)$ be the number of divisors of $n$ congruent to 1
  and 3 modulo 4, respectively. Then we have:
  \begin{align*}
    \sum_{d | n} \chi_{-4}(d) =  \sigma_0^{(1)}(n) - \sigma_0^{(3)}(n)\,,
  \end{align*}
  and so
  \begin{align*}
    r_2(n) =  \left(\sigma_0^{(1)}(n) - \sigma_0^{(3)}(n)\right).
  \end{align*}
  If follows that for each positive integer $n$, we have
  $\sigma_0^{(1)}(n) \geq \sigma_0^{(3)}(n)$ (it means that there are
  more divisors of $n$ congruent to 1 (mod 4) than congruent to 3 (mod
  4)). Furthermore, if $p$ is prime, $p\equiv 1\mod 4$, then $r_2(p)=8$,
  which is a famous theorem of Fermat (see e.g.~\cite{rademacher}).
\end{remark}

\bigskip

\section{Multicomplex Algebras}
\label{sec:multicomplex_algebras}

Without giving many details (for which we refer the reader to the
fairly comprehensive recent
references~\cite{bicomplex2,bicomplexbook,price,bicomplex3,bicomplex4}),
we will simply say that the space $\BC_n$ of multicomplex numbers is
the space generated over the reals by $n$ commuting imaginary
units. The algebraic properties of this space and analytic properties
of multicomplex valued functions defined on $\BC_n$ has been studied
in~\cite{bicomplex4}.

In the case of only one imaginary unit, denoted by $\i_1$, the space
$\BC_1$ is the usual complex plane $\C$. Since, in what follows, we
will have to work with different complex planes, generated by
different imaginary units, we will denote such a space also by
$\C(\i_1)$, in order to clarify which imaginary unit is used in the
space itself.

The next case occurs when we have two commuting imaginary units $\i_1$
and $\i_2$. This yields the bicomplex space $\BC_2$, or $\BC$.  For
simplicity of notation, we will relabel the units as
\begin{align*}
  \i := \i_1,\qquad \j:= \i_2,\qquad \k := \i\j = \i_1\i_2\,.
\end{align*}
Note that $\k$ is a hyperbolic unit, i.e. it is a unit which squares
to $1$.  Because of these various units in $\BC$, there are several
different conjugations that can be defined naturally. We will not make
use of these conjugations in this paper, but we refer the reader
to~\cite{bicomplexbook}.

\medskip

$\BC$ is not a division algebra, and it has two distinguished zero
divisors, $\e$ and $\edag$, which are idempotent, linearly independent
over the reals, and mutually annihilating with respect to the
bicomplex multiplication:
\begin{align*}
  \e&:=\frac{1+\k}{2}\,,\qquad \edag:=\frac{1-\k}{2}\,,\\
  \e \cdot \edag &= 0,\qquad
  \e^2=\e , \qquad (\edag)^2 =\edag\,,\\
  \e +\edag &=1, \qquad \e -\edag = \k\,.
\end{align*}
Just like $\{1, \j \},$ they form a basis of the complex algebra
$\BC$, which is called the {\em idempotent basis}. If we define the
following complex variables in $\C(\i)$:
\begin{align*}
  \beta_1 := z_1-\i_1z_2,\qquad \beta_2 := z_1+\i_1z_2\,,
\end{align*}
the $\C(\i)$--{\em idempotent representation} for $Z=z_1+\i_2 z_2$ is
given by
\begin{align*}
  Z &= \beta_1\e+\beta_2\edag\,.
\end{align*}

The $\C(\i)$--idempotent is the only representation for which
multiplication is component-wise, as shown in the next proposition.

\begin{proposition}
  \label{prop:idempotent}
  The addition and multiplication of bicomplex numbers can be realized
  component-wise in the idempotent representation above. Specifically,
  if $Z= a_1\,\e + a_2\,\edag$ and $W= b_1\,\e + b_2\,\edag $ are two
  bicomplex numbers, where $a_1,a_2,b_1,b_2\in\C(\i)$, then
  \begin{eqnarray*}
    Z+W &=& (a_1+b_1)\,\e  + (a_2+b_2)\,\edag   ,  \\
    Z\cdot W &=& (a_1b_1)\,\e  + (a_2b_2)\,\edag   ,  \\
    Z^n &=& a_1^n \,\e  + a_2^n \,\edag  .
  \end{eqnarray*}
  Moreover, the inverse of an invertible bicomplex number
  $Z=a_1\e + a_2\edag $ (in this case $a_1 \cdot a_2 \neq 0$) is given
  by
  $$
  Z^{-1}= a_1^{-1}\e + a_2^{-1}\,\edag ,
  $$
  where $a_1^{-1}$ and $a_2^{-1}$ are the complex multiplicative
  inverses of $a_1$ and $a_2$, respectively.
\end{proposition}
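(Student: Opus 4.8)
The plan is to verify each assertion by direct computation, using as the only algebraic input the four relations characterizing the idempotent basis, namely $\e^2=\e$, $(\edag)^2=\edag$, $\e\cdot\edag=0$, and $\e+\edag=1$, together with the fact that $\e$ and $\edag$ are $\R$-linearly independent (indeed $\C(\i)$-linearly independent) so that a representation $a_1\e+a_2\edag$ determines $a_1,a_2\in\C(\i)$ uniquely. All coefficients commute with $\e$ and $\edag$ because they lie in the commutative subalgebra $\C(\i)\subset\BC$, and bicomplex multiplication is commutative and associative; these facts are what make the cross terms disappear.

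First I would treat addition: $Z+W=(a_1\e+a_2\edag)+(b_1\e+b_2\edag)=(a_1+b_1)\e+(a_2+b_2)\edag$, which is immediate by distributivity. Next, multiplication: expanding
\begin{align*}
  Z\cdot W
  &=(a_1\e+a_2\edag)(b_1\e+b_2\edag)\\
  &=a_1b_1\,\e^2+a_1b_2\,\e\,\edag+a_2b_1\,\edag\,\e+a_2b_2\,(\edag)^2\\
  &=a_1b_1\,\e+a_2b_2\,\edag\,,
\end{align*}
where the two middle terms vanish because $\e\cdot\edag=0$ and the remaining terms simplify via $\e^2=\e$, $(\edag)^2=\edag$. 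The formula for $Z^n$ then follows by an immediate induction on $n$ from the multiplication formula (with the base case $Z^1=Z$). For the inverse, I would first observe that if $a_1a_2\neq 0$ then $a_1,a_2$ are invertible in $\C(\i)$, and set $W:=a_1^{-1}\e+a_2^{-1}\edag$; applying the multiplication formula just proved gives $Z\cdot W=(a_1a_1^{-1})\e+(a_2a_2^{-1})\edag=\e+\edag=1$, so $W=Z^{-1}$. Conversely, if $Z$ is invertible then neither $a_1$ nor $a_2$ can be $0$, since a zero component would force $Z$ to be a zero divisor (e.g. if $a_1=0$ then $Z\cdot\e=a_1\e^2+a_2\edag\e=0$ while $\e\neq 0$), which I would note in passing to justify the parenthetical claim $a_1a_2\neq 0$.

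There is no real obstacle here; the statement is essentially a bookkeeping lemma. The one point worth stating carefully is the uniqueness of the idempotent coefficients — that $a_1\e+a_2\edag=a_1'\e+a_2'\edag$ implies $a_1=a_1'$ and $a_2=a_2'$ — which follows because $\{\e,\edag\}$ is a basis of $\BC$ over $\C(\i)$; this is what legitimizes speaking of "the" component-wise operations and identifies $Z^{-1}$ uniquely. I would mention this once and then carry out the three displayed computations above, which is the whole proof.
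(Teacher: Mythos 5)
Your proof is correct and is exactly the routine verification the paper has in mind: the paper states Proposition~\ref{prop:idempotent} without proof (treating it as a standard fact, and only remarking afterwards that one can alternatively check that the map~\eqref{shakira} is the unique component-wise product). Your direct expansion using $\e^2=\e$, $(\edag)^2=\edag$, $\e\cdot\edag=0$, $\e+\edag=1$, together with the observation that $a_1=0$ or $a_2=0$ forces $Z$ to be a zero divisor, covers everything claimed, including the parenthetical $a_1\cdot a_2\neq 0$.
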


One can see this also by computing directly which product on the
bicomplex numbers of the form
\begin{align*}
  x_1 + \i x_2 + \j x_3 + \k x_4,\qquad x_1,x_2,x_3,x_4\in\R
\end{align*}
is component wise, and one finds that the only one with this property
is given by the mapping:
\begin{align}
  \label{shakira}
  x_1 + \i x_2 + \j x_3 + \k x_4 \mapsto ((x_1 + x_4) + \i (x_2-x_3), (x_1-x_4) + \i (x_2+x_3))\,,
\end{align}
which corresponds exactly with the idempotent decomposition
\begin{align*}
  Z = z_1 + \j z_2 = (z_1-\i z_2)\e + (z_1+\i z_2)\edag\,,
\end{align*}
where $z_1 = x_1+\i x_2$ and $z_2 = x_3+\i x_4$.

\bigskip

The principal ideals $\ip{\e}$ and $\ip{\edag}$ generated by $\e$ and
$\edag$ in $\BC$ have the following properties:
\begin{align*}
  \ip{\e}\cdot \ip{\edag} = \{0\},\qquad
  \ip{\e} \cap \ip{\edag} = \{0\},\qquad
  \ip{\e} + \ip{\edag} = \BC\,,
\end{align*}
so they are coprime ideals in $\BC$.

\bigskip

We now turn to the definition of the multicomplex spaces, $\BC_n,$ for
values of $n\geq 2$. These spaces are defined by taking $n$ commuting
imaginary units $\i_1, \i_2,\dots,\i_n$ i.e. $\i_a^2=-1,$ and
$\i_a\i_b=\i_b\i_a$ for all $a,b=1,\dots,n.$ Since the product of two
commuting imaginary units is a hyperbolic unit, and since the product
of an imaginary unit and a hyperbolic unit is an imaginary unit, we
see that these units will generate a set $\mathfrak{A}_n$ of $2^n$
units, $2^{n-1}$ of which are imaginary and $2^{n-1}$ of which are
hyperbolic units. Then the algebra generated over the real numbers by
$\mathfrak{A}_n$ is the multicomplex space $\BC_n$ which forms a ring
under the usual addition and multiplication operations. As in the case
$n=2$, the ring $\BC_n$ can be represented as a real algebra, so that
each of its elements can be written as
$Z=\sum_{I\in \mathfrak{A}_n} Z_I I$, where $Z_I$ are real numbers.

In particular, following~\cite{price}, it is natural to define the
$n$-dimensional multicomplex space as follows:
$$
\BC_n:=\{Z_n=Z_{n-1,1}+\i_nZ_{n-1,2}\,\big|\,Z_{n-1,1},Z_{n-1,2}\in\BC_{n-1}\}
$$
with the natural operations of addition and multiplication. Since
$\BC_{n-1}$ can be defined in a similar way using the $\i_{n-1}$ unit
and multicomplex elements of $\BC_{n-2}$, we recursively obtain, at
the $k-$th level:
$$
Z_n=\sum_{|I|=n-k}\,\prod_{t=k+1}^{n} (\i_t)^{\alpha_t-1} Z_{k,I}
$$
where $Z_{k,I}\in\BC_k$, $I=(\alpha_{k+1},\dots,\alpha_n)$, and
$\alpha_j\in\{1,2\}$.

\medskip



Just as in the case of $\BC_2$, there exist idempotent bases in
$\BC_n$, that will be organized at each ``nested'' level $\BC_k$
inside $\BC_n$ as follows.  Denote by
\begin{align*}
  \e_{kl} &:= \frac{1+\i_k\i_l}{2}\,,\qquad
  \bar\e_{kl} := \frac{1-\i_k\i_l}{2}\,.
\end{align*}
Consider the following sets:
\begin{align*}
  S_1 &:= \{\e_{n-1,n}, \bar\e_{n-1,n}\},\\
  S_2 &:= \{\e_{n-2,n-1}\cdot S_1, \bar\e_{n-2,n-1}\cdot S_1\},\\
  &\vdots\\
  S_{n-1} &:= \{\e_{12}\cdot S_{n-2}, \bar\e_{12}\cdot S_{n-2}\}.
\end{align*}
At each stage $k$, the set $S_k$ has $2^k$ idempotents.  It is
possible to immediately verify the following
\begin{proposition}
  In each set $S_k$, the product of any two idempotents is zero.
\end{proposition}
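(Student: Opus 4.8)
The plan is to prove this by a direct computation resting on two elementary facts: the algebra of a single idempotent pair, and the commutativity of the imaginary units. No genuine obstacle is expected here; the statement is essentially a routine verification, and the only point needing a moment's care is flagged at the end.

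\textbf{Step 1 (the one-pair identities).} For any two distinct indices $k,l$ the element $\i_k\i_l$ is a hyperbolic unit, since $(\i_k\i_l)^2 = \i_k^2\i_l^2 = (-1)(-1) = 1$. Hence $\e_{kl}$ and $\bar\e_{kl}$ are idempotent and
\[
  \e_{kl}\cdot\bar\e_{kl} \;=\; \frac{(1+\i_k\i_l)(1-\i_k\i_l)}{4} \;=\; \frac{1-(\i_k\i_l)^2}{4} \;=\; 0 .
\]
Moreover every element of $\BC_n$ that is a polynomial in $\i_1,\dots,\i_n$ commutes with every other such element; in particular all the $\e_{kl}$, all the $\bar\e_{kl}$, and all finite products of these commute with one another.

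\textbf{Step 2 (the elements of $S_k$).} Unwinding the recursion by an obvious induction, an element of $S_k$ is exactly a product
\[
  A \;=\; \eta_k\,\eta_{k-1}\cdots\eta_1 , \qquad \eta_j \in \bigl\{\,\e_{n-j,\,n-j+1}\,,\ \bar\e_{n-j,\,n-j+1}\,\bigr\} ,
\]
with one factor for each pair $(n-1,n),(n-2,n-1),\dots,(n-k,n-k+1)$ used in building $S_1\subset S_2\subset\cdots\subset S_k$. By Step~1 each such $A$ is indeed idempotent, since $A^2=\eta_k^2\cdots\eta_1^2=A$ by commutativity and idempotency of the factors, consistent with the stated count of $2^k$ idempotents in $S_k$.

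\textbf{Step 3 (conclusion).} Let $A=\eta_k\cdots\eta_1$ and $B=\eta_k'\cdots\eta_1'$ be two distinct elements of $S_k$. If $\eta_j=\eta_j'$ held for every $j$ we would have $A=B$; hence there is an index $j_0$ with $\eta_{j_0}\neq\eta_{j_0}'$, and then $\{\eta_{j_0},\eta_{j_0}'\}=\{\e_{n-j_0,\,n-j_0+1},\ \bar\e_{n-j_0,\,n-j_0+1}\}$. Regrouping the $2k$ factors of $A\cdot B$ pair by pair, which is legitimate by the commutativity of Step~1, gives
\[
  A\cdot B \;=\; \prod_{j=1}^{k}\bigl(\eta_j\,\eta_j'\bigr),
\]
and the $j_0$-th factor is $\e_{n-j_0,\,n-j_0+1}\cdot\bar\e_{n-j_0,\,n-j_0+1}=0$, so $A\cdot B=0$. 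Equivalently, one can induct on $k$ along the recursion $S_k=\e_{n-k,n-k+1}\,S_{k-1}\cup\bar\e_{n-k,n-k+1}\,S_{k-1}$: writing $A=\eta a$, $B=\eta' b$ with $a,b\in S_{k-1}$, either the leading factors differ and $\eta\eta'=0$, or $\eta=\eta'$, in which case $a\neq b$ (equal parts would give $A=B$) and the inductive hypothesis yields $ab=0$, whence $A\cdot B=\eta^2(ab)=0$.

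The statement presents no real difficulty; the only subtlety worth flagging is that multiplication by an idempotent is not cancellative in $\BC_n$, so the distinctness of $A$ and $B$ must be exploited at the level of their defining sign sequences, exactly as above, rather than by cancelling a shared idempotent factor.
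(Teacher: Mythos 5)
Your proof is correct, and it is precisely the ``immediate verification'' that the paper leaves to the reader: unwind the recursion defining $S_k$, use commutativity of the units to regroup, and kill the product with the single identity $\e_{kl}\bar\e_{kl}=\frac{1-(\i_k\i_l)^2}{4}=0$. Your closing caveat about non-cancellativity and reading ``any two'' as ``any two distinct'' is sensible but does not change the substance; there is nothing to add or correct.
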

We have several idempotent representations of $Z_n\in\BC_n$, as follows.
\begin{theorem}
  Any $Z_n\in\BC_n$ can be written as:
  $$
  Z_n=\sum_{j=1}^{2^k} Z_{n-k,j}\e_j,
  $$
  where $Z_{n-k,j}\in\BC_{n-k}$ and $\e_j\in S_k$.
\end{theorem}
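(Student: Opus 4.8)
The plan is to prove the statement by induction on $k$, using the nested idempotent structure of $\BC_n$ and Proposition~\ref{prop:idempotent} as the base case. For $k=1$ we work inside the bicomplex subalgebra generated by $\i_{n-1}$ and $\i_n$ over the coefficient ring $\BC_{n-2}$; writing $Z_n = Z_{n-1,1} + \i_n Z_{n-1,2}$ and using that $\e_{n-1,n}$ and $\bar\e_{n-1,n}$ behave exactly as $\e,\edag$ did in the two-unit case, we get the decomposition $Z_n = A_1 \e_{n-1,n} + A_2 \bar\e_{n-1,n}$ with $A_1 = Z_{n-1,1} - \i_{n-1} Z_{n-1,2}$ and $A_2 = Z_{n-1,1} + \i_{n-1} Z_{n-1,2}$, both lying in $\BC_{n-1}$. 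This is essentially the content of the $\C(\i)$--idempotent representation, now applied one level down.

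For the inductive step, suppose $Z_n = \sum_{j=1}^{2^k} Z_{n-k,j}\e_j$ with $\e_j \in S_k$ and $Z_{n-k,j} \in \BC_{n-k}$. Each coefficient $Z_{n-k,j}$ is a multicomplex number in $\BC_{n-k}$, so in particular it can be written in the bicomplex-style form $Z_{n-k,j} = W_{n-k-1,j}^{(1)} + \i_{n-k} W_{n-k-1,j}^{(2)}$ with the two new coefficients in $\BC_{n-k-1}$. Applying the one-level idempotent split to each $Z_{n-k,j}$ using $\e_{n-k-1,n-k}$ and $\bar\e_{n-k-1,n-k}$ yields $Z_{n-k,j} = B_j^{(1)}\e_{n-k-1,n-k} + B_j^{(2)}\bar\e_{n-k-1,n-k}$ with $B_j^{(1)}, B_j^{(2)} \in \BC_{n-k-1}$. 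Substituting this back gives
\begin{align*}
  Z_n = \sum_{j=1}^{2^k} \left( B_j^{(1)}\,\e_{n-k-1,n-k}\,\e_j + B_j^{(2)}\,\bar\e_{n-k-1,n-k}\,\e_j \right),
\end{align*}
and since $S_{k+1} = \{\e_{n-k-1,n-k}\cdot S_k,\ \bar\e_{n-k-1,n-k}\cdot S_k\}$ by definition, the $2^{k+1}$ elements $\e_{n-k-1,n-k}\e_j$ and $\bar\e_{n-k-1,n-k}\e_j$ are exactly the members of $S_{k+1}$, and the coefficients $B_j^{(1)}, B_j^{(2)}$ lie in $\BC_{n-k-1} = \BC_{n-(k+1)}$, completing the induction.

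The main point requiring care, rather than a genuine obstacle, is checking that the new idempotent factor $\e_{n-k-1,n-k}$ genuinely commutes with everything in $S_k$ and with the $\BC_{n-k-1}$ coefficients, and that multiplying an element of $S_k$ by $\e_{n-k-1,n-k}$ or $\bar\e_{n-k-1,n-k}$ produces $2^{k+1}$ distinct idempotents whose pairwise products vanish — but this is precisely the content of the preceding proposition applied to $S_{k+1}$, together with the commutativity of all the imaginary units $\i_1,\dots,\i_n$. I would also note that the decomposition is in fact unique, since the idempotents in $S_k$ sum to $1$ and are mutually annihilating, so the $\e_j$ form a complete orthogonal system of idempotents; however, the statement as posed only asserts existence, so it suffices to exhibit the representation as above.
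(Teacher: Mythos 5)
Your induction is correct, and it is exactly the argument implied by the recursive definitions of $\BC_n$ and of the sets $S_k$: the paper in fact states this theorem without proof, and your base case is the bicomplex idempotent split applied to the units $\i_{n-1},\i_n$ over $\BC_{n-2}$, while your inductive step correctly peels off one more level using $\e_{n-k-1,n-k}$ and $\bar\e_{n-k-1,n-k}$ so that the products with the elements of $S_k$ are, by definition, the elements of $S_{k+1}$. Your closing remark is also apt: distinctness and mutual annihilation of the new idempotents are only needed for uniqueness, which the statement does not assert, so exhibiting the representation suffices.
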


Due to the fact that the product of two idempotents is $0$ at each
level $S_k,$ we will have many zero divisors in $\BC_n$ organized in
``singular cones''.

\medskip

In particular, at the last stage, we obtain:
\begin{proposition}
  Any $Z_n\in\BC_n$ admits the idempotent writing:
  \begin{align}
    \label{last_stage}
    Z_n=\sum_{j=1}^{2^{n-1}} \beta_j\e_j,
  \end{align}
  where $\beta_j$ are complex numbers (with respect to one fixed imaginary unit, say
  $\i$) and $\e_j\in S_{n-1}$. For a chosen imaginary unit $\i$, this decomposition is unique.
\end{proposition}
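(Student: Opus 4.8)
The plan is to read off existence directly from the preceding Theorem, and to obtain uniqueness from a dimension count over the field $\C(\i)$, where we identify $\C(\i)=\C(\i_1)$ with the copy of $\C$ generated inside $\BC_n$ by the fixed imaginary unit $\i=\i_1$.

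\textbf{Existence.} Apply the preceding Theorem with $k=n-1$. Then $\BC_{n-k}=\BC_1=\C(\i_1)=\C(\i)$, so the coefficients $Z_{n-k,j}$ are complex numbers $\beta_j:=Z_{1,j}\in\C(\i)$, and the writing $Z_n=\sum_{j=1}^{2^{n-1}}Z_{1,j}\,\e_j$ provided by that Theorem is exactly a representation of the form \eqref{last_stage}. In particular the $2^{n-1}$ idempotents $\e_j\in S_{n-1}$ span $\BC_n$ as a vector space over $\C(\i)$.

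\textbf{Uniqueness.} Since $\BC_n$ is a commutative algebra containing the field $\C(\i)$, multiplication makes $\BC_n$ into a $\C(\i)$-vector space. Its real dimension is $2^n$ (a real basis being the set $\mathfrak{A}_n$ of $2^n$ units, obtained by doubling the dimension at each recursive step $\BC_k=\{Z_{k-1,1}+\i_kZ_{k-1,2}\}$), and $[\C(\i):\R]=2$, hence $\dim_{\C(\i)}\BC_n=2^{n-1}$. A spanning family of exactly $2^{n-1}$ vectors in a $2^{n-1}$-dimensional space is automatically a basis, so $\{\e_j\}_{j=1}^{2^{n-1}}$ is a $\C(\i)$-basis of $\BC_n$ and the coefficients $\beta_j$ in \eqref{last_stage} are uniquely determined by $Z_n$. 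This is the asserted uniqueness.

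\textbf{An alternative route, and the delicate point.} One may instead argue by hand: from $\sum_j(\beta_j-\beta_j')\e_j=0$, multiplying by $\e_k$ and using the Proposition that the product of any two distinct idempotents of $S_{n-1}$ vanishes, together with $\e_k^2=\e_k$, one gets $(\beta_k-\beta_k')\e_k=0$; as $\C(\i)$ is a field, $\beta_k\neq\beta_k'$ would force $\e_k=0$. The one point that genuinely needs care here — precisely because $\BC_n$ has an abundance of zero divisors — is to know a priori that each $\e_k\neq0$. The dimension count above handles this automatically; alternatively one checks that in the expansion of $\e_k$ in the basis $\mathfrak{A}_n$ the coefficient of $1$ equals $2^{-(n-1)}\neq0$, the point being that a product of the units $\i_a\i_b$ with $\{a,b\}$ ranging over the consecutive pairs $\{n-1,n\},\{n-2,n-1\},\dots,\{1,2\}$ can equal $\pm1$ only for the empty product, since those pairs form a path graph, which contains no nonempty subgraph with all vertex degrees even. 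I expect this non-vanishing (equivalently, the $\C(\i)$-independence of the $\e_j$) to be the only real obstacle; the rest is formal.
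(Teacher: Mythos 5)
Your argument is correct, and it is worth noting that the paper offers no proof of this proposition at all: it is stated as the $k=n-1$ specialization of the preceding theorem, with the uniqueness claim left entirely unjustified. Your existence step is exactly that specialization, so nothing to compare there. For uniqueness you supply what the paper omits, and the dimension count is the cleanest way to do it: $\dim_\R\BC_n=2^n$ from the recursive doubling, $[\C(\i):\R]=2$, hence $\dim_{\C(\i)}\BC_n=2^{n-1}$, and a spanning family of $2^{n-1}$ vectors is a basis. You are also right that the ``multiply by $\e_k$ and cancel'' argument, which is presumably what the authors had in mind given the proposition that distinct idempotents in $S_{n-1}$ annihilate each other, is incomplete without knowing $\e_k\neq 0$ --- in an algebra as riddled with zero divisors as $\BC_n$ this genuinely requires checking. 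Your verification that the coefficient of $1$ in the $\mathfrak{A}_n$-expansion of $\e_k$ is exactly $2^{-(n-1)}$ is correct: a product $\prod\i_a\i_b$ over a nonempty subset of the consecutive pairs reduces to $\pm\prod_a\i_a^{d_a}$ with some $d_a$ odd, because the pairs form a path and a forest has trivial cycle space, so no nonempty even subgraph exists. Either route closes the gap; the dimension count buys brevity, while the explicit non-vanishing of $\e_k$ is the more informative fact and would be worth recording in the paper, since it is also what guarantees that the isomorphism $\BC_n\simeq\C(\i)^{2^{n-1}}$ asserted immediately afterwards is onto a product of $2^{n-1}$ genuinely nonzero factors.
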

As in the case of bicomplex numbers, in this representation, the
multiplication of multicomplex numbers is component-wise, and yields
the isomorphism:
\begin{align*}
  \BC_n \simeq \sum_{j=1}^{2^{n-1}} \C(\i)\e_j\,.
\end{align*}

\medskip

This decomposition allows us to introduce a formula analogous
to~\eqref{shakira}. Explicitly, for $n=3$ in $\BC_3$, we have:
\begin{align}
  \label{shakiraDD}
  Z_3 &= Z_{21} + \i_3 Z_{22} = (Z_{21}-\i_2 Z_{22})\e_{23} + (Z_{21}+\i_2 Z_{22})\bar{\e}_{23}\nonumber\\
  &= \left((x_1+x_7+x_4-x_6)+ \i_1(x_2+x_8-x_3+x_5)\right)\e_1 \nonumber\\
  &+ \left((x_1+x_7-x_4+x_6)+ \i_1(x_2+x_8+x_3-x_5)\right)\e_2 \nonumber\\
  &+ \left((x_1-x_7+x_4+x_6)+ \i_1(x_2-x_8-x_3-x_5)\right)\e_3 \nonumber\\
  &+ \left((x_1-x_7-x_4-x_6)+ \i_1(x_2-x_8+x_3+x_5)\right)\e_4
\end{align}
This idempotent representation gives a component wise multiplicative
structure on $\C^4$:
\begin{align*}
  &\left((x_1 + \i_1 x_2) + \i_2(x_3 + \i_1 x_4)\right) +
  \i_3 \left((x_5 + \i_1 x_6) + \i_2(x_7 + \i_1 x_8)\right)\longmapsto\\
  &\left[(x_1+x_7+x_4-x_6)+ \i_1(x_2+x_8-x_3+x_5), (x_1+x_7-x_4+x_6)+ \i_1(x_2+x_8+x_3-x_5),\right.\\
  & \left.(x_1-x_7+x_4+x_6)+ \i_1(x_2-x_8-x_3-x_5),  (x_1-x_7-x_4-x_6)+ \i_1(x_2-x_8+x_3+x_5)\right]
\end{align*}

A tedious but straightforward computation shows that this is the only
multiplication on tricomplex numbers that is component-wise.

\bigskip

\section{Bicomplex and Multicomplex zeta functions}
\label{sec:bicomplex_zeta}

We consider first the case of bicomplex algebra $\BC=\BC_2$. 
Inside $\BC$ we consider the vector space $\BQ$ over $\Q$ 
\begin{equation}
  \label{eq:BQ}
  \BQ := \{ Z = x_1 +y_1\i + x_2\j + y_2\k\,\big|\, x_\ell,y_\ell\in\Q, \quad\ell=1,2\}\,.
\end{equation}

The vector space $\BQ$ can be equipped with a structure of
$\Q$-algebra, generated by the two variables $\i,\j$, with the
relations
\begin{align*}
  \i\j = \j\i,\qquad \i^2+1 = \j^2+1 =0\,.
\end{align*}
so that
\begin{align*}
  \BQ \simeq \Q[\i,\j]\big/\left(\i^2+1,\j^2+1\right)
\end{align*}
which is a commutative algebra.  Furthermore, note that
\begin{align*}
  \Q[X,Y]/(X^2+1,Y^2-1) \simeq \Q[X,Y]/(X^2+1,Y^2+1)
\end{align*}
by $X\mapsto X$ and $Y\mapsto XY$. Indeed,
\begin{align*}
  (XY)^2 + 1 &= X^2(Y^2-1) + (X^2+1)\in (X^2+1,Y^2-1),\\
  (XY)^2 - 1 &= X^2(Y^2+1) - (X^2+1)\in (X^2+1,Y^2+1)\,.
\end{align*}
More generally, if we let $K$ be a field and consider the $K$-algebra
\[
A= K[X,Y,Z]/(X^2+1, Y^2+1, Z-XY).
\]
We have
\[
A= K[X,Y]/(X^2+1, Y^2+1),
\]
because we can eliminate the variable $Z$ by using the relation
$Z= XY$. We can further write
\[
A= \left(K[X]/(X^2+1)\right)[Y]/(Y^2+1),
\]
and we denote $C:= K[X]/(X^2+1)$, and by $\i$ the class of
$X \in K[X]$ in this quotient. Then we get:
\[
A= C[Y]/(Y^2+1)= C[Y]/(Y+\i)(Y-\i)\simeq C[Y]/(Y+\i) \times C[Y]/(Y-\i),
\]
where the isomorphism (of $C$-algebras and hence of $K$-algebras) is
induced by the canonical map
\begin{align}
  \label{crucial}
  (\pi_1, \pi_2): C[Y]\to  C[Y]/(Y+\i) \times C[Y]/(Y-\i)
\end{align}
given by the two surjective maps $\pi_1, \pi_2$.  If we return to
$K[X,Y,Z]$, the images of $ X,Y$ and $Z$ in $A$ by the isomorphism
just considered are $ (\i,\i) $, $(-\i,\i)$ (note that the class of
$Y$ in the first factor is $-\i$, due to the relation $Y+\i= 0$) and
$(\i,\i)(-\i,\i)= (1,-1) $.  In conclusion, for
$x_1,x_2,x_3,x_4 \in K$, the class of $x_1+x_2X+x_3Y+x_4Z$ in $A$ is
\[
x_1(1,1)+x_2(\i,\i)+x_3(-\i,\i)+x_4(1,-1) = \left((x_1+x_4)
  +\i(x_2-x_3), (x_1-x_4) +\i(x_2+x_3) \right)
\]
which is exactly the map~\eqref{shakira} corresponding to the
idempotent representation of bicomplex numbers. 

\noindent Therefore, we obtain the following characterization of the
algebra $\BQ$:
\begin{theorem}
  \label{BQ:product}
  The algebra $\BQ$ is isomorphic to the product $\Q(\i)\times\Q(\i)$.
  The isomorphism is given by~\eqref{crucial} for $C=\Q$.
\end{theorem}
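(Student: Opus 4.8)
The plan is to follow exactly the chain of algebra isomorphisms laid out in the paragraphs immediately preceding the statement, specialized to $K = \Q$. First I would recall that, by definition, $\BQ$ as a $\Q$-algebra is $\Q[\i,\j]/(\i^2+1,\j^2+1)$, a commutative algebra. The first step is the change of variables $X \mapsto X$, $Y \mapsto XY$ establishing the isomorphism $\Q[X,Y]/(X^2+1,Y^2-1) \simeq \Q[X,Y]/(X^2+1,Y^2+1)$; this is already checked in the excerpt via the displayed computations $(XY)^2+1 \in (X^2+1, Y^2-1)$ and $(XY)^2 - 1 \in (X^2+1, Y^2+1)$, so I would simply invoke it. More directly, however, it is cleaner to work with the presentation $A = \Q[X,Y,Z]/(X^2+1, Y^2+1, Z - XY)$ and note that eliminating $Z$ gives $A \simeq \Q[X,Y]/(X^2+1,Y^2+1) = \BQ$.

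Next I would peel off the variables one at a time. Writing $C := \Q[X]/(X^2+1)$ and letting $\i$ denote the class of $X$ in $C$, we have $C \simeq \Q(\i)$, the field of Gaussian rationals, since $X^2+1$ is irreducible over $\Q$. Then $A = C[Y]/(Y^2+1)$, and the key move is to factor $Y^2+1 = (Y+\i)(Y-\i)$ over $C$: since $\i$ and $-\i$ are distinct elements of the field $C$, the ideals $(Y+\i)$ and $(Y-\i)$ are comaximal in $C[Y]$ (their sum contains $(Y-\i)-(Y+\i) = -2\i$, a unit), so the Chinese Remainder Theorem gives the $C$-algebra isomorphism
\begin{align*}
  C[Y]/(Y^2+1) \simeq C[Y]/(Y+\i) \times C[Y]/(Y-\i) \simeq C \times C \simeq \Q(\i)\times\Q(\i),
\end{align*}
with the isomorphism induced by the map $(\pi_1,\pi_2)$ of~\eqref{crucial}, where $\pi_1$ sends $Y \mapsto -\i$ and $\pi_2$ sends $Y \mapsto \i$. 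Composing all of these identifications yields $\BQ \simeq \Q(\i)\times\Q(\i)$, and by construction the isomorphism is the one induced by~\eqref{crucial}.

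Finally I would verify that this abstract isomorphism is the concrete idempotent map~\eqref{shakira}, which is what ties the algebraic statement back to the idempotent representation of bicomplex numbers and is really the content worth recording. Here one tracks the images of the generators: $X \mapsto (\i,\i)$, $Y \mapsto (-\i,\i)$ (the sign in the first factor coming from the relation $Y+\i = 0$ there), and $Z = XY \mapsto (\i,\i)(-\i,\i) = (1,-1)$. Hence for $x_1,x_2,x_3,x_4 \in \Q$ the class of $x_1 + x_2 X + x_3 Y + x_4 Z$ maps to $\big((x_1+x_4) + \i(x_2-x_3),\ (x_1-x_4) + \i(x_2+x_3)\big)$, which is precisely~\eqref{shakira} under the dictionary $\i = \i_1$, $\j = \i_2$ corresponding to $X$, $Y$ and $\k = \i\j$ corresponding to $Z$. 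I do not anticipate a genuine obstacle: every step is a standard quotient-algebra manipulation. The only point requiring a moment of care is the comaximality of $(Y+\i)$ and $(Y-\i)$, which rests on $2$ (equivalently $2\i$) being invertible in $\Q(\i)$ — this is why the argument works over $\Q$ (or any field of characteristic $\neq 2$) but would fail in characteristic $2$; since the statement only claims the result for $C = \Q$, this causes no difficulty.
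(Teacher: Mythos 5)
Your proof is correct and takes essentially the same route as the paper: both arguments are applications of the Chinese Remainder Theorem to the splitting of $Y^2+1$ over $\Q(\i)$, and your version is precisely the explicit computation the paper carries out in the paragraphs preceding the theorem. The paper's formal proof merely rephrases this in terms of the idempotents $\e,\edag$ and the coprime ideals $I_\e$, $I_{\edag}$, which correspond (up to units) to your ideals $(Y-\i)$ and $(Y+\i)$.
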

\begin{proof}
  Explicitly, we use the idempotent representation of bicomplex
  numbers in order to get a component-wise product of ideals, which is
  necessary for the Chinese Remainder theorem:
  \begin{align*}
    \BQ \simeq \Q(\i)\e + \Q(\i)\edag\,,
  \end{align*}
  and recall that the ideals $I_\e=\Q(\i)\e$ and $I_\edag=\Q(\i)\edag$
  are coprime in $\BQ$, with $I_\e\cdot I_\edag = \{0\}$. The Chinese
  Remainder theorem yields
  \begin{align*}
    \BQ  \simeq \BQ/I_\e \times \BQ/I_\edag = I_\edag \times I_\e
    \simeq \Q(\i)\times \Q(\i)\,,
  \end{align*}
  which is what we had to prove.
\end{proof}

\bigskip

The main objective now is to define a Dedekind-like zeta function for
an algebra which is the product of fields. According to
Artin~\cite{artin} and Hey~\cite{hey}, one can define a Dedekind-like
zeta function for hypercomplex algebras (such as the bicomplex and
multicomplex ones), if one considers ideals of maximal order in the
defining formula~\eqref{def:dedekind}. Moreover, it follows also that
the resulting zeta function for a product algebra will be the product
of the corresponding zeta functions of the factors, whenever the
multiplication is defined component-wise.

Therefore, the plan is to find the maximal order of the algebra $\BQ$
(see Theorem~\ref{thm:BQ} below), and to derive the ideal structure of
$\BQ$ (Lemma~\ref{product structure}), concluding with the main result
(Theorem~\ref{main}) defining the bicomplex zeta function.

We recall the following notions (see e.g.~\cite{reiner}). Let $R$ be a
commutative domain with quotient field $K$, and let $A$ be a
finite-dimensional $K$-algebra.  If a full $R$--lattice $\calL$ in $A$
(i.e. $\calL$ is a finitely generated $R$--submodule such that
$K\calL =A$) is a subring of $A$, then one says that $\calL$ is an
{\em $R$--order} in $A$. If, moreover, $\calL$ is not properly
contained in any $R$--order of $A$, then it is called a {\em maximal
  order}.

An algebra $A$ is called {\em semisimple} if it is isomorphic to a
direct sum of simple algebras (i.e. do not have non-trivial
subalgebras).  Furthermore, $A$ is called a {\em separable}
$K$--algebra if $A$ is semisimple and the center of each simple
component of $A$ is a separable field extension of $K$.

We now prove the following:
\begin{theorem}
  \label{thm:BQ}
  $\BQ$ is a semisimple separable algebra. Moreover, the maximal order
  of $\BQ$ is the product $\Z[\i]\times\Z[\i]$.
\end{theorem}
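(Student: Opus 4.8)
The plan is to leverage Theorem~\ref{BQ:product}, which identifies $\BQ$ with the product of fields $\Q(\i)\times\Q(\i)$. First I would establish semisimplicity and separability: by Theorem~\ref{BQ:product}, $\BQ\simeq\Q(\i)\times\Q(\i)$ is a direct sum of two copies of the field $\Q(\i)$, each of which is a simple $\Q$-algebra with no nontrivial two-sided ideals, so $\BQ$ is semisimple by definition. For separability, the center of each simple component is $\Q(\i)$ itself, and $\Q(\i)=\Q[X]/(X^2+1)$ is a separable field extension of $\Q$ since $X^2+1$ is separable over $\Q$ (it has distinct roots $\pm\i$ in characteristic zero, where all finite extensions are separable). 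Hence $\BQ$ is a separable $\Q$-algebra.

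Next I would identify the maximal order. The strategy is: (a) show $\Z[\i]\times\Z[\i]$ is a $\Z$-order in $\BQ$; (b) show it is the unique maximal one, either by a general structure theorem or by a direct argument. For (a), note that $\Z[\i]\times\Z[\i]$ is a subring of $\Q(\i)\times\Q(\i)\simeq\BQ$, it is finitely generated as a $\Z$-module (free of rank $4$, on the basis $(1,0),(\i,0),(0,1),(0,\i)$), and its $\Q$-span is all of $\Q(\i)\times\Q(\i)=\BQ$, so it is a full $\Z$-lattice that is a subring, i.e.\ a $\Z$-order. For (b), I would invoke the standard fact (see~\cite{reiner}) that in a separable algebra over the quotient field of a Dedekind domain, for a product decomposition $A=A_1\times\cdots\times A_r$ into simple components, the maximal orders are precisely the products $\Lambda_1\times\cdots\times\Lambda_r$ of maximal orders $\Lambda_i$ in the $A_i$; and that for a number field $A_i=K_i$ viewed as a $\Q$-algebra, the unique maximal $\Z$-order is the ring of integers $\calO_{K_i}$. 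Applying this with $A_1=A_2=\Q(\i)$ gives that the maximal order of $\BQ$ is $\calO_{\Q(\i)}\times\calO_{\Q(\i)}=\Z[\i]\times\Z[\i]$, using the standard computation $\calO_{\Q(\i)}=\Z[\i]$ (here $d=-1\not\equiv 1\bmod 4$, so by the formula in Section~\ref{sec:quadratic_fields}, $\calO_{-1}=\Z+\Z\sqrt{-1}=\Z[\i]$).

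I expect the main obstacle to be verifying, or deciding how much to cite versus prove, that maximal orders in a product algebra decompose as products of maximal orders in the factors, and that a maximal order is unique in the commutative semisimple case. The former is a consequence of the fact that the idempotents $e_1=(1,0)$ and $e_2=(0,1)$ lie in every order (being integral over $\Z$, since $e_i^2=e_i$), so any order $\Lambda$ satisfies $\Lambda=e_1\Lambda\oplus e_2\Lambda$ with $e_i\Lambda$ an order in $A_i$; the latter follows because in a commutative separable algebra over $\Q$, the integral closure of $\Z$ is itself a $\Z$-order (it is finitely generated, a classical Noetherian/Dedekind-domain fact) and it visibly contains every order, hence is the unique maximal order. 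Here the integral closure of $\Z$ in $\Q(\i)\times\Q(\i)$ is computed componentwise, giving $\Z[\i]\times\Z[\i]$. I would present steps (a) and the idempotent-splitting argument explicitly, and cite~\cite{reiner} for the remaining structural facts.
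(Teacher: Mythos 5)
Your overall route is the same as the paper's: reduce via Theorem~\ref{BQ:product} to the product $\Q(\i)\times\Q(\i)$, invoke the structure theorem for maximal orders in a separable algebra with central idempotent decomposition (the paper cites \cite[Theorem 10.5]{reiner}), and conclude with $\calO_{\Q(\i)}=\Z[\i]$. Your treatment of semisimplicity and separability is in fact more explicit than the paper's, which disposes of the first statement by citing the Weierstrass--Dedekind theorem from \cite{drozd}; what you wrote there is correct.

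There is, however, one concretely false step in your supplementary justification of the product decomposition: the claim that the central idempotents $e_1=(1,0)$ and $e_2=(0,1)$ ``lie in every order (being integral over $\Z$).'' Integrality over $\Z$ does not force membership in a given order, and the example at hand already refutes the claim: $\Lambda=\Z+\Z\i+\Z\j+\Z\k$ is a full $\Z$-lattice and a subring of $\BQ$, hence an order, yet it does not contain $\e=(1+\k)/2$ or $\edag=(1-\k)/2$, which correspond to $(1,0)$ and $(0,1)$ under the isomorphism of Theorem~\ref{BQ:product}. Consequently the asserted splitting $\Lambda=e_1\Lambda\oplus e_2\Lambda$ fails for this $\Lambda$ (the right-hand side is a strictly larger lattice). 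The statement you need, and the one Reiner actually proves, is that the central idempotents lie in every \emph{maximal} order: if $\Lambda$ is any order and $e$ is a central idempotent of $A$, then $\Lambda+\Lambda e$ is again an order containing $\Lambda$, so a maximal order must already contain $e$; only then does the decomposition $\Lambda=e_1\Lambda\oplus e_2\Lambda$ hold and reduce the problem to the factors. Since you also propose to cite \cite{reiner} for these structural facts, the main line of your proof survives, but if you include the idempotent argument in the write-up you must insert the maximality step; as written it is an invalid inference.
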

\begin{proof}
  The first statement follows from Theorem~\ref{BQ:product} above and
  a theorem of Weierstrass and Dedekind (see e.g.~\cite[Theorem 2.4.1
  (pp. 38)]{drozd}), which states that a commutative semisimple
  algebra is isomorphic to a direct product of fields, and,
  conversely, a direct product of fields is a semisimple algebra.

  Next, we recall~\cite[Theorem 10.5]{reiner} that in a separable
  algebra with a central idempotent decomposition
  \begin{align*}
    A = A_1 \oplus \dots \oplus A_t
  \end{align*}
  (such as the bicomplex and multicomplex algebras) every maximal
  order has a corresponding maximal order decomposition at each
  level. In particular, if one defines by $\{e_i\}$ the central
  idempotents of $A$ such that $e_ie_j=0$, $i\neq j$,
  $1= e_1 + \dots + e_t$, and $A_i=Ae_i$, then each maximal order is a
  direct sum of the maximal orders of each component.

  From Theorem~\ref{BQ:product} it follows that the maximal order of
  $\BQ$ is the product of the maximal orders of each factor, and the
  maximal order of $\Q(\i)$ is $\Z[\i]$.
\end{proof}

\bigskip

In order to derive the ideal structure of the algebra $\BQ$, we recall
a more general classical lemma on the ideal structure of a product of
two unitary commutative rings.
\begin{lemma}
  \label{product structure}
  Let $R=R_1\times R_2$ be the product of two unitary commutative
  rings. If $\calI(A)$ is the monoid of ideals of a ring $A$, with the
  binary operation $(a,b)\mapsto a\cdot b$, then
  \begin{align*}
    \calI(R) = \calI(R_1)\times\calI(R_2)\,.
  \end{align*}
\end{lemma}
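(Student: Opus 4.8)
The plan is to establish the claimed equality $\calI(R)=\calI(R_1)\times\calI(R_2)$ as an isomorphism of monoids by exhibiting the natural maps in both directions and checking they are mutually inverse and multiplicative. First I would show that every ideal $J$ of $R=R_1\times R_2$ has the form $J = J_1\times J_2$ where $J_i$ is an ideal of $R_i$. To see this, set $e_1=(1,0)$ and $e_2=(0,1)$, which are central idempotents with $e_1+e_2=1$ and $e_1e_2=0$. Given an ideal $J\subset R$, put $J_i := \{x_i\in R_i \mid (x_1,x_2)\in J \text{ for some } x_{3-i}\}$, equivalently $J_i = \pi_i(J)$ where $\pi_i:R\to R_i$ is the projection; since $\pi_i$ is a surjective ring homomorphism, $J_i$ is an ideal of $R_i$. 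The key point is that $J = J_1\times J_2$: the inclusion $J\subset J_1\times J_2$ is immediate, and conversely if $x_1\in J_1$ and $x_2\in J_2$, pick $(x_1,y_2),(y_1,x_2)\in J$; then $e_1\cdot(x_1,y_2) = (x_1,0)\in J$ and $e_2\cdot(y_1,x_2)=(0,x_2)\in J$, so their sum $(x_1,x_2)$ lies in $J$.

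Next I would verify that the assignment $(J_1,J_2)\mapsto J_1\times J_2$ is a bijection between $\calI(R_1)\times\calI(R_2)$ and $\calI(R)$: surjectivity is the decomposition just proved, and injectivity is clear since $J_1\times J_2$ determines $J_1$ and $J_2$ as its images under the projections. Finally I would check that this bijection is a monoid homomorphism, i.e. that it respects the product of ideals. On one side, the product in $\calI(R_1)\times\calI(R_2)$ is defined componentwise, $(J_1,J_2)\cdot(K_1,K_2) = (J_1K_1,\,J_2K_2)$. On the other side, one must show $(J_1\times J_2)\cdot(K_1\times K_2) = (J_1K_1)\times(J_2K_2)$ as ideals of $R$. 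The inclusion $\subset$ holds because $(a_1,a_2)(b_1,b_2)=(a_1b_1,a_2b_2)$ generates elements of $(J_1K_1)\times(J_2K_2)$; for the reverse, an element $(c,0)$ with $c\in J_1K_1$ is a finite sum $\sum a_i b_i$ with $a_i\in J_1, b_i\in K_1$, and $(c,0)=\sum (a_i,0)(b_i,0)$ exhibits it in the product ideal (using again that $(a_i,0)=e_1(a_i,x)\in J_1\times J_2$ for any $x\in J_2$, and similarly for $(b_i,0)$), and symmetrically for $(0,c')$ with $c'\in J_2K_2$; summing handles the general element. The identity element $R=R_1\times R_2$ corresponds to $(R_1,R_2)$, so units are preserved.

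The routine parts here are the ideal-verifications (that $\pi_i(J)$ is an ideal, that products of ideals behave well), which follow from the fact that the $\pi_i$ are surjective ring homomorphisms and that $e_1,e_2$ are orthogonal central idempotents summing to $1$. The one place that deserves genuine care — and what I regard as the main obstacle — is the argument that the product of ideals splits as a product, specifically the reverse inclusion $(J_1K_1)\times(J_2K_2)\subset (J_1\times J_2)(K_1\times K_2)$: one has to be careful that the generators of the product ideal on the right are the products $(a_1,a_2)(b_1,b_2)$ with $a\in J_1\times J_2$, $b\in K_1\times K_2$, and then use the idempotents to isolate the first and second components, writing a general element $(c,c')$ as $(c,0)+(0,c')$ and expressing each summand via products of ``one-sided'' elements that still lie in the relevant product ideals. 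Once this componentwise multiplicativity is in hand, the statement $\calI(R)=\calI(R_1)\times\calI(R_2)$ follows as an isomorphism of monoids.
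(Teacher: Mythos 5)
Your proof is correct and follows essentially the same route as the paper's: both decompose an arbitrary ideal $I\subset R_1\times R_2$ as $I_1\times I_2$ using the orthogonal idempotents $(1,0)$ and $(0,1)$ to isolate components. You additionally verify that the bijection respects the product of ideals (i.e.\ that $(J_1\times J_2)(K_1\times K_2)=(J_1K_1)\times(J_2K_2)$), a point the paper leaves implicit, so your write-up is if anything slightly more complete.
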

\begin{proof}
  To begin with, if $I_1$ is an ideal of $R_1$ then $I_1\times\{0\}$
  is an ideal of the product. Moreover
  \begin{align*}
    R/I_1 \simeq R_1/I_1 \times R_2\,.
  \end{align*}
  Similarly, for an ideal $I_2$ of $R_2$. Moreover, if $I_1$ is an ideal
  of $R_1$ and $I_2$ is an ideal of $R_2$, then $I_1\times I_2$ is an
  ideal of $R_1\times R_2$, and
  \begin{align*}
    (R_1\times R_2)/(I_1\times I_2) \simeq R_1/I_1 \times R_2/I_2\,.
  \end{align*}
  Reciprocally, if $R_1$ and $R_2$ are two commutative rings and $I$ is
  an ideal of $R_1\times R_2$, we define:
  \begin{align*}
    I_1 &:= \{ r_1\in R_1\,\big|\, \exists r_2\in R_2, (r_1,r_2)\in I\}\\
    I_2 &:= \{ r_2\in R_2\,\big|\, \exists r_1\in R_1, (r_1,r_2)\in I\}\,,
  \end{align*}
  then $I_1, I_2$ are ideals of $R_1, R_2$, respectively, and
  $I=I_1\times I_2$. Indeed, observe first that $I_1\times \{0\}$ and
  $\{0\}\times I_2$ are ideals of $R$, contained in $I$: if
  $i_1\in I_1$, then for some $r_2\in R_2$, $(i_1,r_2)\in I$, and then
  \begin{align*}
    (i_1,0)=(i_1,r_2) (1,0)\,,
  \end{align*}
  and similarly for $I_2$. Therefore
  \begin{align*}
    I_1\times I_2 = (I_1\times \{0\}) + (\{0\}\times I_2)\subset I\,.
  \end{align*}
  Conversely, if $(x,y)\in I$, then
  \begin{align*}
    (x,y) = (x,0)(1,0) + (0,y)(0,1)\in I_1\times I_2\,.
  \end{align*}
\end{proof}

\bigskip

\noindent It follows that ideals of maximal order in $\BQ$ are
products of ideals of maximal order in each one of the factors
$\Q(\i)$, i.e. products of ideals of $\Z[\i]$. 

We have everything necessary now to prove our main result:
\begin{theorem}
  \label{main}
  The Dedekind-like (Hey) zeta function of the algebra $\BQ$ is
  \begin{align*}
    \zeta_{\BQ}(s) = \zeta(s)^2\cdot L(\chi_{-4},s)^2\,.
  \end{align*}
  Moreover, $\zeta_{\BQ}$ has a double pole at $s=1$, a residue
  equal to $\displaystyle\frac{\pi}{2}\gamma_{\Q(\i)}$, and it
  verifies the functional equation:
  \begin{align*}
    \pi^{-(2-2s)} \Gamma^2(1-s)\zeta_{\BQ}(1-s) =
    \pi^{-2s} \Gamma^2(s)\zeta_{\BQ}(s)\,.
  \end{align*}
\end{theorem}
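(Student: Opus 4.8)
The strategy is to reduce every assertion to the field $\Q(\i)$ by means of the three structural results already established: the isomorphism $\BQ\simeq\Q(\i)\times\Q(\i)$ of Theorem~\ref{BQ:product}, the identification of the maximal order of $\BQ$ with $\Z[\i]\times\Z[\i]$ in Theorem~\ref{thm:BQ}, and the description of the ideals of a product ring in Lemma~\ref{product structure}. Feeding these into the analytic properties of $\zeta_{\Q(\i)}(s)=\zeta(s)L(\chi_{-4},s)$ collected in Section~\ref{sec:dedekind_zeta} will yield all three claims.

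First I would make the Hey zeta function of $\BQ$ explicit. By the Artin--Hey prescription, $\zeta_{\BQ}(s)=\sum_{\mathfrak a}N(\mathfrak a)^{-s}$, the sum running over the nonzero ideals $\mathfrak a$ of finite index in the maximal order $\calO:=\Z[\i]\times\Z[\i]$, with $N(\mathfrak a)=|\calO/\mathfrak a|$. By Lemma~\ref{product structure} every ideal of $\calO$ has the form $I_1\times I_2$ with $I_1,I_2$ ideals of $\Z[\i]$, and since $\calO/(I_1\times I_2)\simeq\Z[\i]/I_1\times\Z[\i]/I_2$, such an ideal is of finite index precisely when $I_1$ and $I_2$ are both nonzero, in which case $N(I_1\times I_2)=N(I_1)\,N(I_2)$. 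For $\Re(s)>1$ the series $\sum_{I\subset\Z[\i]}N(I)^{-s}=\zeta_{\Q(\i)}(s)$ converges absolutely, so the double series factors:
\begin{align*}
  \zeta_{\BQ}(s)=\sum_{I_1,I_2}\frac{1}{\bigl(N(I_1)N(I_2)\bigr)^{s}}
  =\Bigl(\,\sum_{I\subset\Z[\i]}\frac{1}{N(I)^{s}}\,\Bigr)^{2}=\zeta_{\Q(\i)}(s)^{2}.
\end{align*}
Combining this with the factorization~\eqref{eq:dedekind_zeta}, $\zeta_{\Q(\i)}(s)=\zeta(s)L(\chi_{-4},s)$, gives $\zeta_{\BQ}(s)=\zeta(s)^{2}L(\chi_{-4},s)^{2}$; in particular $\zeta_{\BQ}$ extends meromorphically to all of $\C$.

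Next I would read off the pole at $s=1$ from the Laurent expansion of $\zeta_{\Q(\i)}$ recorded in Section~\ref{sec:dedekind_zeta}, namely $\zeta_{\Q(\i)}(s)=\frac{L(\chi_{-4},1)}{s-1}+\gamma_{\Q(\i)}+O(s-1)$ with $L(\chi_{-4},1)=\frac{\pi}{4}$. Squaring,
\begin{align*}
  \zeta_{\BQ}(s)=\frac{L(\chi_{-4},1)^{2}}{(s-1)^{2}}+\frac{2\,L(\chi_{-4},1)\,\gamma_{\Q(\i)}}{s-1}+O(1),
\end{align*}
so $\zeta_{\BQ}$ has a double pole at $s=1$ with residue $2\cdot\frac{\pi}{4}\,\gamma_{\Q(\i)}=\frac{\pi}{2}\,\gamma_{\Q(\i)}$.

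For the functional equation I would invoke the one satisfied by the Dedekind zeta function of the imaginary quadratic field $\Q(\i)$: since the discriminant of $\Q(\i)$ is $-4$, one has $4^{s/2}(2\pi)^{-s}=\pi^{-s}$, and the completed function $\pi^{-s}\Gamma(s)\zeta_{\Q(\i)}(s)$ is invariant under $s\mapsto1-s$, i.e.
\begin{align*}
  \pi^{-s}\Gamma(s)\,\zeta_{\Q(\i)}(s)=\pi^{-(1-s)}\Gamma(1-s)\,\zeta_{\Q(\i)}(1-s).
\end{align*}
(Alternatively one derives this by multiplying the functional equations of $\zeta(s)$ and of $L(\chi_{-4},s)$, whose root number is $+1$, and simplifying via Legendre's duplication formula $\Gamma(\tfrac{s}{2})\Gamma(\tfrac{s+1}{2})=2^{1-s}\sqrt{\pi}\,\Gamma(s)$.) Squaring this identity and using $\zeta_{\BQ}=\zeta_{\Q(\i)}^{2}$ gives exactly
\begin{align*}
  \pi^{-(2-2s)}\Gamma^{2}(1-s)\,\zeta_{\BQ}(1-s)=\pi^{-2s}\Gamma^{2}(s)\,\zeta_{\BQ}(s).
\end{align*}

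I expect the only genuinely delicate point to be the factorization $\zeta_{\BQ}(s)=\zeta_{\Q(\i)}(s)^{2}$: one must check that restricting the Artin--Hey sum to finite-index ideals of the maximal order $\Z[\i]\times\Z[\i]$ produces a Dirichlet series that splits as the product of the corresponding series over each factor, namely that $N(I_1\times I_2)=N(I_1)N(I_2)$ and that the double sum may legitimately be rearranged into a product for $\Re(s)>1$. Everything afterwards is bookkeeping, with the one spot where a slip is likely being the constant and root-number accounting in deriving the functional equation of $\zeta_{\Q(\i)}$ from those of $\zeta$ and $L(\chi_{-4},\cdot)$.
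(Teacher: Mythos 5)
Your proposal is correct and follows essentially the same route as the paper: reduce to $\Q(\i)\times\Q(\i)$ via Theorem~\ref{BQ:product}, Theorem~\ref{thm:BQ} and Lemma~\ref{product structure}, factor $\zeta_{\BQ}=\zeta_{\Q(\i)}^2$, and then read off the pole, residue, and functional equation from the known analytic properties of $\zeta(s)L(\chi_{-4},s)$. The only differences are that you are more explicit than the paper about why the ideal sum factors and about the Laurent-expansion computation of the residue (which the paper states without proof), and that you quote the completed functional equation of $\zeta_{\Q(\i)}$ directly where the paper multiplies the functional equations of $\zeta$ and $\beta$; both yield the same identity.
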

\begin{proof}
  The Hey zeta function of the algebra $\BQ$ is defined as
  in~\eqref{def:dedekind} for all ideals of maximal order of
  $\BQ$. According to Theorems~\ref{BQ:product} $\BQ$ is isomorphic to
  the product $\Q(\i)\times \Q(\i)$, and using Theorem~\ref{thm:BQ}
  and Lemma~\ref{product structure}, the maximal order ideals of $\BQ$
  are products of maximal ideals of $\Z[\i]$. It follows that the zeta
  function of $\BQ$ is the product of the two respective Dedekind zeta
  functions of $\Q(\i)$.  Since we know the
  expression~\eqref{eq:dedekind_zeta} for the Dedekind zeta function
  for the field $\Q(\i)$, this proves the first part of the theorem.

  As for the second part, it follows from the functional equations of
  the Dirichlet beta and of the Riemann zeta functions written in the
  following form:
  \begin{align*}
    \beta(1-s) &= \left(\frac\pi2\right)^{-s} \sin\left(\frac\pi2 s\right)
                 \Gamma(s)\beta(s)\,,\\
    \zeta(1-s) &= \frac{1}{2^s \pi^{s-1} \sin\left(\frac\pi2 s\right)\Gamma(1-s)}\zeta(s)\,.
  \end{align*}
  This concludes the proof of our main result.
\end{proof}

\bigskip

The Dirichlet $L$-series $L(\chi_{-4},s)$ is analytic in
$\{\Re(s)>0\}$ by general principles on Dirichlet series. This follows
from the fact that if $(a_n)$ is a sequence of complex numbers such
that there exist $C>0$ and $r>0$ such that for large $n$, we have:
\begin{align*}
  \left|\sum_{k=1}^n a_k\right|\leq C n^r\,,
\end{align*}
then the Dirichlet series
\begin{align*}
  f(s) = \sum_{n\geq 1} \frac{a_n}{n^s},\qquad s\in\C
\end{align*}
is analytic on $\{\Re(s)>0\}$.  
Now if $K$ is a quadratic field with discriminant $\Delta$,
the quadratic character $\chi_K$,
\begin{align*}
  \chi_K(m)=\left(\frac\Delta m\right)
\end{align*}
is periodic of period $|\Delta|$, so for any $n$,
\begin{align*}
  \sum_{n=n_0}^{n_0+|\Delta|-1} \chi_K(n) =0
\end{align*}
and there exists $C>0$ such that
\begin{align*}
  \left|\sum_{k=1}^n \chi_K(k)\right|\leq C.
\end{align*}
The analytic (and meromorphic) continuation of $\beta$ (and $\zeta$)
can also be deduced from the integral formulas:
\begin{align*}
  \beta(s) &= \frac{1}{\Gamma(s)} \int_0^\infty \frac{t^{s-1}}{e^t + e^{-t}}\, dt,\qquad \Re(s)>0,\\
  \zeta(s) &= \frac{1}{(1-2^{1-s})\Gamma(s)} \int_0^\infty \frac{t^{s-1}}{e^t + 1}\, dt.
\end{align*}

From the decomposition in infinite products of $L(\chi_{-4},s)$ and of
the Riemann zeta function, we obtain:
\begin{align}
  \zeta_{\BQ}(s) &= \left(\prod_{p\equiv 1\mod 4}\left(1-p^{-s}\right)^{-2}
                   \prod_{p\equiv 3\mod 4}\left(1-p^{-2s}\right)^{-2}\left(1-2^{-s}\right)^{-1}\right)^2\nonumber\\
                 &= \left( \sum_{n\geq 0} \frac{(-1)^n}{(2n+1)^s}\right)^2\,.
\end{align}

\bigskip

\begin{remark}
  We extend Remark~\ref{remark} to the case of our bicomplex zeta
  function.  Let $a_n$ be the Dirichlet series coefficients of
  $\zeta_{\Q(\i)}(s)$ and let $A_n$ be the corresponding coefficients
  of $\zeta_{\BQ}(s)$. Then
  \begin{align*}
    A_n= \sum_{p,q\,;\,  pq=n}a_p a_q= \frac{1}{16} \sum_{p,q\,;\, pq=n}r_2(p) r_2(q)   \,.
  \end{align*}
  In particular $A_1= 1$ and if $n$ is prime
  \begin{align*}
    A_n= \frac{1}{2}r_2(n)  
  \end{align*}
  because $r_2(1)= 4$.  If moreover, $n$ is prime congruent to 1 (mod
  4), by Fermat theorem we get $A_n= 4$. So we have precise
  information about the Dirichlet coefficients $A_n$ of our bicomplex
  zeta function $\zeta_{\BQ}(s)$, when $n$ is prime and a fixed value
  when $n$ is prime congruent to 1(mod 4).  The first coefficients of
  the Dirichlet series of the bicomplex zeta series are:
  $$
  A_1= 1, A_2=2, A_3= 0, A_4= 3, A_5= 4,\dots
  $$
\end{remark}

\bigskip

We now conclude with the general case of the multicomplex algebra
$\BC_n$. The definitions and proofs follow closely the case $n=2$ of
bicomplex numbers, so, for simplicity, we just state the main result.

The corresponding ``rational'' subalgebras $\BQ_n$ are defined
analogously to~\eqref{eq:BQ}.  The component-wise multiplication given
by the idempotent representation in the last stage
(see~\eqref{last_stage} and~\eqref{shakiraDD}) produces the splitting
of $\BQ_n$ into $2^{n-1}$ factors of $\Q(\i_1)$:
\begin{align*}
  \BQ_n  \simeq \prod_{\ell=1}^{2^{n-1}} \BQ_n/I_{\e_\ell}
  \simeq \Q(\i_1)^{2^{n-1}}\,.
\end{align*}
As before, the maximal order of the product above is the product of
the maximal orders of $\Q(\i_1)$ (which is $\Z[\i_1]$). In complete
analogy, we obtain the following expression for the associated zeta
function of multicomplex numbers:

\begin{theorem}
  The Dedekind-like zeta function of the algebra $\BQ_n$ is
  \begin{align}
    \zeta_{\BQ_n}(s) = \zeta(s)^{2^{n-1}}\cdot L(\chi_{-4},s)^{2^{n-1}}\,.
  \end{align}
\end{theorem}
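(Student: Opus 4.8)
The plan is to mirror, step by step, the argument already developed for $\BQ=\BQ_2$ in Theorem~\ref{main}, replacing the two-fold splitting by the $2^{n-1}$-fold splitting of $\BQ_n$. First I would justify the isomorphism $\BQ_n\simeq\Q(\i_1)^{2^{n-1}}$ which is quoted just before the statement: starting from the presentation $\BQ_n\simeq\Q[\i_1,\dots,\i_n]/(\i_1^2+1,\dots,\i_n^2+1)$, one eliminates the hyperbolic generators exactly as in the computation $A=K[X,Y]/(X^2+1,Y^2+1)$ earlier in Section~\ref{sec:bicomplex_zeta}, and then applies the Chinese Remainder Theorem repeatedly (equivalently, uses the last-stage idempotent decomposition~\eqref{last_stage} together with the proposition that the product of any two idempotents in $S_{n-1}$ is zero, so the ideals $I_{\e_\ell}=\Q(\i_1)\e_\ell$ are pairwise coprime with pairwise-zero products). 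This gives $\BQ_n\simeq\prod_{\ell=1}^{2^{n-1}}\BQ_n/I_{\e_\ell}\simeq\Q(\i_1)^{2^{n-1}}$, and by the Weierstrass--Dedekind theorem cited in the proof of Theorem~\ref{thm:BQ}, $\BQ_n$ is semisimple and separable.

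Next I would determine the maximal order. By~\cite[Theorem 10.5]{reiner}, invoked as in the proof of Theorem~\ref{thm:BQ}, in a separable algebra with a central idempotent decomposition $A=A_1\oplus\dots\oplus A_t$ every maximal order is the direct sum of the maximal orders of the components; applied here with $t=2^{n-1}$ and each $A_\ell\simeq\Q(\i_1)$, whose maximal order (ring of integers) is $\Z[\i_1]$, this yields that the maximal order of $\BQ_n$ is $\prod_{\ell=1}^{2^{n-1}}\Z[\i_1]$. Then, iterating Lemma~\ref{product structure} $n-1$ times, the monoid of ideals of maximal order of $\BQ_n$ is $\calI(\Z[\i_1])^{2^{n-1}}$, so every such ideal is a product $\mathfrak{a}_1\times\cdots\times\mathfrak{a}_{2^{n-1}}$ of ideals of $\Z[\i_1]$, and since the multiplication is component-wise, its norm (the cardinality of the quotient) is the product of the norms $\prod_\ell N(\mathfrak{a}_\ell)$.

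With this in hand the Hey zeta function factors multiplicatively. Writing $\zeta_{\BQ_n}(s)=\sum_{\mathfrak{a}}N(\mathfrak{a})^{-s}$ over ideals of maximal order and using the bijection $\mathfrak{a}\leftrightarrow(\mathfrak{a}_1,\dots,\mathfrak{a}_{2^{n-1}})$ together with $N(\mathfrak{a})=\prod_\ell N(\mathfrak{a}_\ell)$, the sum splits as a $2^{n-1}$-fold product of copies of the Dedekind zeta function $\zeta_{\Q(\i)}(s)$; by~\eqref{eq:dedekind_zeta}, each factor equals $\zeta(s)L(\chi_{-4},s)$, which gives
\begin{align*}
  \zeta_{\BQ_n}(s)=\bigl(\zeta_{\Q(\i)}(s)\bigr)^{2^{n-1}}=\zeta(s)^{2^{n-1}}L(\chi_{-4},s)^{2^{n-1}}\,.
\end{align*}
I expect the only real point requiring care — the ``main obstacle,'' though a mild one — is the bookkeeping that the norm of an ideal of maximal order in a product algebra is genuinely the product of the component norms, i.e. that $\calO/\mathfrak{a}\cong\prod_\ell\Z[\i_1]/\mathfrak{a}_\ell$ as finite rings; this is immediate from Lemma~\ref{product structure} but is the step that makes the Euler-product factorization legitimate. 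Everything else is a routine induction on $n$ that reproduces the $n=2$ arguments verbatim. One could add, exactly as in Theorem~\ref{main}, the consequences that $\zeta_{\BQ_n}$ has a pole of order $2^{n-1}$ at $s=1$ and satisfies the functional equation obtained by raising the completed functional equations of $\zeta$ and $\beta$ to the power $2^{n-1}$.
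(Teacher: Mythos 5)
Your proposal is correct and follows essentially the same route as the paper, which itself only states that the general case ``follows closely'' the $n=2$ argument: the last-stage idempotent decomposition gives $\BQ_n\simeq\Q(\i_1)^{2^{n-1}}$, the maximal order is the corresponding product of copies of $\Z[\i_1]$, and the Hey zeta function factors as the $2^{n-1}$-fold product of $\zeta_{\Q(\i)}(s)=\zeta(s)L(\chi_{-4},s)$. Your write-up is in fact more explicit than the paper's (notably on the multiplicativity of the ideal norm across the factors), but it introduces no new idea beyond the paper's intended induction.
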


The multicomplex zeta functions above has a pole of order $2^{n-1}$ at
$s=1$ and a residue equal to
$\displaystyle\frac{\pi}{2}\gamma_{\Q(\i)}$. It verifies the
functional equation:
\begin{align}
  \pi^{-2^{n-1}(1-s)} \Gamma^{2^{n-1}}(1-s)\zeta_{\BQ_n}(1-s) =
  \pi^{-2^{n-1}s} \Gamma^{2^{n-1}}(s)\zeta_{\BQ_n}(s)\,.
\end{align}

\bigskip


\bibliographystyle{unsrt}
\bibliography{bibliography}

\end{document}